\theoremstyle{plain}
\newtheorem{theorem}{Theorem}[section]
\newtheorem{proposition}[theorem]{Proposition}
\newtheorem{conjecture}[theorem]{Conjecture}
\newtheorem{cor}[theorem]{Corollary}
\newtheorem{def-thm}[theorem]{Definition-Theorem}
\newtheorem{lemma}[theorem]{Lemma}
\newtheorem{defi}[theorem]{Definition}
\newtheorem*{claim}{Claim}
\DeclareMathOperator{\codim}{codim}
\DeclareMathOperator{\supp}{Supp\hspace{1pt}}
\theoremstyle{definition}
\newtheorem{remark}[theorem]{Remark}
\def\min{\mathop{\mathrm{min}}}
\def\PP{\mathbb P}
\let\e\epsilon
\let\l\lambda
\begin{document}
\title{{Greatest common divisors of integral points of numerically equivalent divisors}}
\author{Julie Tzu-Yueh Wang}
\address{Institute of Mathematics, Academia Sinica \newline
\indent No.\ 1, Sec.\ 4, Roosevelt Road\newline
\indent Taipei 10617, Taiwan}
\email{jwang@math.sinica.edu.tw}
\author{Yu Yasufuku}
\address{Department of Mathematics, College of Science and Technology, Nihon University\newline
\indent 1-8-14 Kanda-Surugadai, Chiyoda-ku\newline
\indent 101-8308 Tokyo, Japan\newline}
\email{yasufuku@math.cst.nihon-u.ac.jp}
\begin{abstract}
We generalize the G.C.D. results of Corvaja--Zannier and Levin on $\mathbb G_m^n$ to more general settings. More specifically, we analyze the height of a closed subscheme of codimension at least $2$ inside an $n$-dimensional Cohen-Macaulay projective variety, and show that this height is small when evaluated at integral points with respect to a divisor $D$ when $D$ is a sum of $n+1$ effective divisors which are all numerically equivalent to some multiples of a fixed ample divisor. Our method is inspired by Silverman's G.C.D. estimate as an application of Vojta's conjecture, which is substituted by  a more general version of Schmidt's subspace theorem of Ru--Vojta in our proof.
 \end{abstract}
 \thanks{2010\ {\it Mathematics Subject Classification.}
11J97, 11J87, 14G05, 32A22.}
\thanks{The first named author was supported in part by Taiwan's MoST grant 108-2115-M-001-001-MY2, and the second named author was supported in part by Japan's JSPS grant 15K17522 and 19K03412.}

\maketitle \pagestyle{myheadings}
\markboth{Julie Tzu-Yueh Wang and Yu Yasufuku}{G.C.D. of integral points of numerically equivalent divisors}


\section{Introduction and Statements}\label{introduction}
\setcounter{equation}{0}
In a recent work \cite{levin_gcd}, Levin obtained the following result which bounds the greatest common divisor of multivariable polynomials evaluated at $S$-unit arguments.  This result is a generalization of results of Bugeaud-Corvaja-Zannier \cite{Bugeaud:Corvaja:Zannier:2003}, Hern\'andea-Luca \cite{Hernandez:Luca:2003} and Corvaja-Zannier \cite{Corvaja:Zannier:2003}, \cite{Corvaja:Zannier:2005}.  We refer to \cite{levin_gcd} for a survey of these related results.
\begin{theorem}[{\cite[Theorem 1.1]{levin_gcd}}]\label{Levin:thm1.1}
Let $\Gamma \subseteq \mathbb G^r_m(\overline{\mathbb Q})$ be a finitely generated group and fix nonconstant coprime polynomials $f(x_1,\dots,x_r), g(x_1,\dots, x_r) \in \overline{\mathbb Q}[x_1,\dots,x_r]$ which do not both vanish at the origin $(0,\dots,0)$.  Then, for each $\epsilon > 0$, there exists a finite union $Z$ of translates of proper algebraic subgroups of $ \mathbb G_m^r$ so that
$$
\log \gcd (f(\mathbf{u}), g(\mathbf{u}) ) < \epsilon \max_i \{h(u_i)\}
$$
for all $\mathbf{u} = (u_1,\dots,u_r) \in \Gamma \setminus Z$.
\end{theorem}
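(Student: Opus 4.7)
The plan is to recast $\log \gcd(f(\mathbf{u}), g(\mathbf{u}))$ as the $S$-proximity function of $\mathbf{u}$ to the codimension-two closed subscheme $Y := V(\tilde f) \cap V(\tilde g) \subset \mathbb P^r$ (where $\tilde f, \tilde g$ are the homogenisations of $f, g$), and then to bound this proximity by a divisorial Schmidt subspace theorem, à la Evertse--Ferretti or Ru--Vojta.

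First, fix a number field $K$ over which $\Gamma$, $f$, and $g$ are defined, and a finite set $S$ of places of $K$ containing all archimedean ones and so that $\Gamma \subseteq (\mathcal O_{K,S}^{*})^r$. The elementary identity
$$
\log \gcd(f(\mathbf{u}), g(\mathbf{u})) = \sum_{v \notin S} \min\bigl(-\log |f(\mathbf{u})|_v,\; -\log |g(\mathbf{u})|_v\bigr) + O(1)
$$
then identifies the left-hand side with the proximity function $m_{Y,S}(\mathbf{u})$ up to $O(1)$. Coprimality of $f$ and $g$ forces $\codim Y \geq 2$, and the hypothesis that $f, g$ do not both vanish at the origin keeps $Y$ away from a torus-fixed locus of the ambient compactification, which is what allows the subsequent height estimate to see $Y$ at all rather than having it swallowed by the boundary.

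The heart of the argument is to embed $\mathbb G_m^r \hookrightarrow \mathbb P^N$ by a monomial map $\mathbf{u} \mapsto (\mathbf{u}^{\alpha_0} : \cdots : \mathbf{u}^{\alpha_N})$ for a sufficiently rich set of exponents $\{\alpha_i\}$ including the supports of $f$ and $g$, so that $f$ and $g$ pull back to linear forms $L_f, L_g$ on $\mathbb P^N$. Augmenting this with monomial translates of $L_f, L_g$ together with the coordinate hyperplanes, one obtains a family of effective divisors in a single linear equivalence class and in general position on the image variety. A divisorial subspace theorem then gives
$$
m_{Y,S}(\mathbf{u}) < \epsilon \max_i h(u_i)
$$
for $\mathbf{u}$ outside a proper Zariski-closed subset $W \subsetneq \mathbb P^N$. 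Pulling $W$ back to $\mathbb G_m^r$ and intersecting with $\Gamma$, Laurent's theorem on subvarieties meeting finitely generated multiplicative groups identifies this intersection as contained in a finite union of translates of proper algebraic subgroups, yielding the claimed $Z$.

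The main obstacle will be the combinatorial choice of monomial shifts: they must be numerous and diverse enough that the resulting linear forms lie in general position on $\mathbb P^N$, yet concentrated enough along $Y$ to produce the Chow-weight (or Seshadri-type) bound that the subspace theorem requires in order to extract a $Y$-contribution. This is precisely where the $\epsilon$-slack in the final bound is absorbed, and it is the standard pinch-point in this circle of ideas. A more technical worry is the clean identification of $\log \gcd(f(\mathbf{u}), g(\mathbf{u}))$ with $m_{Y,S}(\mathbf{u})$ for the ideal-theoretic scheme $Y$ via Silverman's height-of-subschemes formalism, rather than via the classical Corvaja--Zannier manoeuvre of bounding differences of divisor heights; this bridge is essentially the reformulation that the present paper develops and that one would invoke here.
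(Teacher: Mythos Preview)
This statement is not proved in the paper; it is quoted from \cite{levin_gcd} as background. The paper's own contribution is Theorem~\ref{heightEnumerical}, from which the geometric reformulation Theorem~\ref{LevinTheorem 1.16} is recovered; the reduction back down to Theorem~\ref{Levin:thm1.1} (whose hypothesis on the origin alone is strictly weaker than the hypothesis on all coordinate points $P_0,\dots,P_n$ in Theorem~\ref{LevinTheorem 1.16}) is not carried out here. So there is no in-paper proof to compare your attempt against.

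That said, your sketch is close in spirit to Levin's original argument---monomial embedding into a large $\mathbb P^N$, reduction of $f,g$ to linear forms, and a direct appeal to Schmidt---which the paper explicitly describes as ``a sophisticated, but a direct application of the classical Schmidt's subspace theorem''. The paper's route for the related Theorem~\ref{heightEnumerical} is genuinely different: one blows up $V$ along $Y$ and applies the Ru--Vojta general theorem (Theorem~\ref{Ru-Vojta}) to the ample line sheaf $\mathcal O(\ell\pi^*D-E)$ on the blowup, estimating the constant $\gamma_{\mathcal L,\pi^*D_i}$ asymptotically in $\ell$ (Lemma~\ref{countinglambda}). No monomial combinatorics, no auxiliary $\mathbb P^N$; the price is the Cohen--Macaulay and general-position hypotheses needed to make the blowup and the Ru--Vojta machinery run.

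One genuine slip in your write-up: your displayed identity sums over $v\notin S$ and you then call the result $m_{Y,S}(\mathbf u)$. The sum over $v\notin S$ is the \emph{counting} function $N_{Y,S}$, not the proximity $m_{Y,S}$ (which sums over $v\in S$); and Levin's generalized $\gcd$ includes all places, so it corresponds up to $O(1)$ to the full height $h_Y$, not to either piece alone. This is not cosmetic: in both \cite{levin_gcd} and in this paper the $m_{Y,S}$ and $N_{Y,S}$ parts are handled by separate arguments (here, Theorems~\ref{proximityEnumerical} and~\ref{Key} respectively), and conflating them obscures exactly the step where your monomial-shift combinatorics, or the paper's blowup estimate, does the real work.
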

The greatest common divisor on the left-hand side of the above inequality is a generalized notion of the usual quantity for integers, adapted to algebraic numbers to also account for the archimedean contributions \cite[Definition 1.4]{levin_gcd}.   He then further elaborated Theorem \ref{Levin:thm1.1} into the following geometric setting in projective spaces. Here, $h_Y$ is a height associated to the closed subscheme $Y$ (see Section \ref{Preliminary}).

 \begin{theorem}\label{LevinTheorem 1.16} \cite[Theorem 1.16]{levin_gcd}
  Let $Y$ be a  closed subscheme of $\mathbb P^n$ of codimension at least 2, defined over a number field.
  Suppose that
 $$
 P_0:=[1:0\cdots:0],\hdots,P_n:=[0:0\cdots:0:1]\notin \supp Y.
 $$
 Let $\Gamma \subseteq \mathbb G^n_m(\overline{\mathbb Q})$ be a finitely generated group.     Then for all $\epsilon>0$, there exists a finite union $Z$ of translates of proper algebraic subgroups of $\mathbb G^n_m$ such that
 \begin{align}
h_{Y}(P)\le   \epsilon h(P) +O(1),
\end{align}
for all $P\in  \Gamma\setminus Z\subset \mathbb P^n(\overline{\mathbb Q})$.
\end{theorem}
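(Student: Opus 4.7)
My approach is to reduce Theorem \ref{LevinTheorem 1.16} to the two-variable GCD bound of Theorem \ref{Levin:thm1.1} by replacing $Y$ with a complete intersection of two hypersurfaces. Fix $d$ large so that the homogeneous ideal $I(Y)$ is generated in degree $d$. Since $\codim Y \geq 2$, generic elements $g_1, g_2 \in I(Y)_d$ are coprime as homogeneous polynomials, so $V(g_1, g_2)$ is a codimension-$2$ complete intersection containing $Y$ scheme-theoretically. The hypothesis that $P_k \notin \supp Y$ for each $k$ additionally lets me arrange, by a generic choice, that for every $k = 0, \ldots, n$ at least one of $g_1(P_k), g_2(P_k)$ is nonzero. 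Functoriality of closed-subscheme heights then gives $h_Y(P) \leq h_{V(g_1, g_2)}(P) + O(1)$.

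Next I dehomogenize in the chart $\{x_0 \neq 0\}$: set $\mathbf u = (x_1/x_0, \ldots, x_n/x_0)$ and $F_j(\mathbf u) := g_j(1, u_1, \ldots, u_n)$. A direct local height computation yields
$$h_{V(g_1, g_2)}(P) \;=\; \log \gcd(F_1(\mathbf u), F_2(\mathbf u)) \;+\; h_B(P) \;+\; O(1),$$
where $B := V(g_1, g_2) \cap \{x_0 = 0\}$ is the base locus at infinity of the rational map $[x_0^d : g_1 : g_2]\colon \mathbb P^n \dashrightarrow \mathbb P^2$, and the first term on the right is Levin's generalized GCD. By construction $F_1, F_2$ are coprime in $\overline{\mathbb Q}[u_1, \ldots, u_n]$ and do not both vanish at the origin, so Theorem \ref{Levin:thm1.1} produces, for any $\epsilon > 0$, a finite union $Z_1$ of translates of proper algebraic subgroups of $\mathbb G_m^n$ outside of which $\log \gcd(F_1(\mathbf u), F_2(\mathbf u)) \leq (\epsilon/2) h(P) + O(1)$.

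It remains to handle the boundary term $h_B(P)$. Observe that $B$ has codimension $\geq 2$ inside $\{x_0=0\} \cong \mathbb P^{n-1}$, and by our construction $B$ misses the coordinate points $P_1, \ldots, P_n$ of that hyperplane. One can therefore bound $h_B(P) \leq (\epsilon/2) h(P) + O(1)$ outside a further exceptional set $Z_2$ by induction on $n$ (the non-vanishing hypothesis at coordinate points is inherited by $B$) or by repeating the two-hypersurface construction for $B$ inside the hyperplane at infinity and invoking Theorem \ref{Levin:thm1.1} once more. Combining, $h_Y(P) \leq \epsilon h(P) + O(1)$ for $P \in \Gamma \setminus (Z_1 \cup Z_2)$. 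The main obstacle is precisely this boundary term: the two-hypersurface reduction unavoidably produces an extraneous codimension-$\geq 3$ subscheme at infinity, and it is the full strength of the hypothesis $P_0, \ldots, P_n \notin \supp Y$—and not just $P_0 \notin \supp Y$—that makes the iterated (or inductive) control of $h_B$ go through.
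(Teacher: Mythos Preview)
Your decomposition $h_{V(g_1,g_2)}(P)=\log\gcd(F_1(\mathbf u),F_2(\mathbf u))+h_B(P)+O(1)$ is correct, and the appeal to Theorem~\ref{Levin:thm1.1} for the first summand is fine. The gap is in the treatment of $h_B$. Your ``induction on $n$'' cannot be applied: the inductive hypothesis would be a statement about points of $\mathbb G_m^{n-1}\subset\{x_0=0\}\cong\mathbb P^{n-1}$, but the points $P$ at which you need to bound $h_B$ lie in $\mathbb G_m^n\subset\mathbb P^n$, \emph{off} the hyperplane $\{x_0=0\}$; there is no way to evaluate an inductive bound for $\mathbb P^{n-1}$ at such $P$. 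Your alternative, ``repeating the two-hypersurface construction for $B$,'' does not reduce the difficulty either: if you again dehomogenize with respect to $x_0$, the new base locus $B'=V(g_1',g_2')\cap\{x_0=0\}$ has the same codimension in $\mathbb P^n$ as $B$, so the iteration does not terminate; and if you dehomogenize with respect to another coordinate you no longer force $B'$ into a strictly smaller coordinate subspace without additional choices and arguments you have not supplied. In short, the reduction to Theorem~\ref{Levin:thm1.1} trades $h_Y$ for $h_B$ with $B$ of the same nature (a codimension~$\ge 2$ subscheme of $\mathbb P^n$ avoiding the $P_i$), and you have not shown how to break this loop.

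What actually works here is to observe that $B\subset\{x_0=0\}$ forces $N_{B,S}(P)\le N_{\{x_0=0\},S}(P)=O(1)$ for $P\in\mathbb G_m^n(\mathcal O_S)$, so $h_B(P)=m_{B,S}(P)+O(1)$; one then needs a \emph{separate} Schmidt-type input (the proximity bound of Evertse--Ferretti/Levin, as in Theorem~\ref{LevinDuke} or Corollary~\ref{proximityEnumericalDivisor} of this paper) to get $m_{B,S}(P)\le\epsilon h(P)$ off a closed set. That is a genuine second ingredient, not an induction, and your sketch does not invoke it. By contrast, the paper does not pass through Theorem~\ref{Levin:thm1.1} at all: it deduces Theorem~\ref{LevinTheorem 1.16} from the more general Theorem~\ref{heightEnumerical}, which is proved by applying the Ru--Vojta general subspace theorem on the blowup of $V$ along (a complete-intersection enlargement of) $Y$, computing $\gamma_{\mathcal L,\pi^*D_i}$ asymptotically, and handling the proximity part via Theorem~\ref{LevinDuke}.
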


The following is our main result that is a direct generalization of Theorem \ref{LevinTheorem 1.16}.
The proof of Theorems \ref{Levin:thm1.1} and \ref{LevinTheorem 1.16} is a sophisticated, but a direct application of the classical Schimidt's subspace theorem.
Our proof of Theorem \ref{heightEnumerical} below is based on  an idea of Silverman in \cite{Silverman2005} and the recent work \cite{ruvojta} of Ru and Vojta.
Therefore, it  provides a different proof for
Theorem \ref{LevinTheorem 1.16}.

\begin{theorem}\label{heightEnumerical}
 Let $V$ be a
  Cohen--Macaulay projective variety   of dimension $n$  defined over a number field $k$, and let $S$ be a finite set of places of $k$.   Let $D_1, \dots, D_{n+1} $ be    effective  Cartier divisors of $V$  defined over $k$  in general position.   Suppose that there exist an ample Cartier divisor $A$ on $V$ and positive integers $d_i$ such that $D_i\equiv d_iA$ for all $1\le i\le n+1$.  Let $Y$ be a  closed subscheme of $V$ of codimension at least 2  that does not contain any  point  of the set
\begin{equation}\label{eq:intersection}
\bigcup_{i=1}^{n+1} \left( \bigcap_{1\le j\ne i\le n+1}\supp D_j\right).
\end{equation}
 Let $\epsilon>0$.  Then there exists a proper Zariski closed set $Z$ of $V(k)$ such that for any set $R$ of $(\sum_{i=1}^{n+1} D_i,S)$-integral points on $V$ we have
  \begin{align}\label{eq:height}
h_Y(P)\le   \epsilon h_A(P)+O(1)
\end{align}
for all for all $P\in R \setminus Z$.
\end{theorem}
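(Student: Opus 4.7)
The plan is to follow Silverman's strategy for the greatest common divisor problem, which reduces a bound on $h_Y$ to a height inequality obtained via a Schmidt-type theorem; in place of the appeal to Vojta's conjecture used by Silverman, I shall feed the input into the general form of Schmidt's subspace theorem due to Ru--Vojta.

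\textbf{Step 1 (Rescaling).} Let $d:=\mathrm{lcm}(d_1,\dots,d_{n+1})$. Replacing each $D_i$ by $(d/d_i)D_i$ and $A$ by $dA$, and absorbing the corresponding constants into $\epsilon$, I may reduce to the case $d_1=\cdots=d_{n+1}=1$, so that $D_i\equiv A$ for every $i$; being an $(\sum D_i,S)$-integral set passes to the rescaled divisor up to bounded error.

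\textbf{Step 2 (Pointwise bound on $\lambda_{Y,v}$).} Fix $v\in S$ and an integral point $P$ at which $\lambda_{Y,v}(P)$ is large, so $P$ is $v$-adically close to some component $W$ of $Y$. Since $V$ is Cohen--Macaulay and $\codim_V Y\ge 2$, the ideal sheaf $\mathcal I_Y$ is locally generated by a regular sequence of length two near any point $P_0\in\supp Y$; choosing such generators $f,g$ yields
\[
\lambda_{Y,v}(P)\ \le\ \min\bigl(\lambda_{(f),v}(P),\,\lambda_{(g),v}(P)\bigr)+O(1).
\]
The hypothesis \eqref{eq:intersection}, that $W$ avoids every $\bigcap_{j\ne i}\supp D_j$, forces at most $n-1$ of the $D_j$ to contain $W$, so one can select the generator $f$ to be a $v$-unit multiple of a local equation of some $D_i$ cutting $W$; the second generator $g$ will be produced from a section of $NA$ that vanishes along $Y$, supplied by the filtration of Step 3.

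\textbf{Step 3 (Filtering $H^0(V,NA)$).} Set $V_N:=H^0(V,NA)$. Since $V$ is Cohen--Macaulay and $\codim_V Y\ge 2$, a Koszul/local-cohomology computation gives
\[
\dim V_N-\dim H^0\bigl(V,NA\otimes\mathcal I_Y^k\bigr)=O\bigl(k^2N^{n-2}\bigr),
\]
uniformly for $k$ in any bounded range, while $\dim V_N$ itself grows like $N^n$. Consequently, sections of $NA$ vanishing to prescribed order along $Y$ fill up $V_N$ to first order in $N$. Using this, I build a filtration of $V_N$ whose steps are simultaneously adapted to the $n+1$ divisors $D_1,\dots,D_{n+1}$ (indexed by vanishing order at each $D_i$) and whose deepest non-zero steps consist of sections vanishing along $Y$; the successive dimensions are controlled by the Hilbert polynomials of $NA$ and of $NA\otimes\mathcal I_Y^k$.

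\textbf{Step 4 (Ru--Vojta and assembly).} Applying the Ru--Vojta general form of Schmidt's subspace theorem to a basis of $V_N$ adapted to the above filtration produces, for $P$ outside a proper Zariski closed $Z$, an inequality of the shape
\[
\sum_{v\in S}\sum_{i=1}^{n+1}\mu_i\,\lambda_{D_i,v}(P)+\sum_{v\in S}\mu_Y\,\lambda_{Y,v}(P)\ \le\ (1+\epsilon')\,h_{NA}(P)+O(1),
\]
with explicit weights $\mu_i,\mu_Y$ coming from the dimension counts of Step 3. The $(\sum D_i,S)$-integrality of $R$ together with $D_i\equiv A$ converts $\sum_{v\in S}\lambda_{D_i,v}(P)$ into $h_A(P)+O(1)$, so the first sum contributes essentially $h_{NA}(P)$; rearranging yields $h_Y(P)\le\epsilon\,h_A(P)+O(1)$ after choosing $N$ large enough and $\epsilon'$ small enough depending on $\epsilon$.

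\textbf{Main obstacle.} The central difficulty is Step 3: one must construct a single filtration of $V_N$ whose Ru--Vojta weights $\mu_i$ and $\mu_Y$ have the correct ratios, so that the $D_i$-terms absorb into $h_{NA}(P)$ leaving a strictly sub-linear contribution from $Y$. The Cohen--Macaulay hypothesis is exactly what guarantees the key estimate $\dim V_N-\dim H^0(V,NA\otimes\mathcal I_Y^k)=O(N^{n-2})$; without it the Koszul complex resolving $\mathcal I_Y$ near $Y$ need not be exact and the $Y$-contribution could swamp the main Ru--Vojta balance.
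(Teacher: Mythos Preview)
Your proposal has a genuine structural gap. The inequality you write in Step~4 bounds only the proximity part
\[
\sum_{v\in S}\mu_Y\,\lambda_{Y,v}(P)=\mu_Y\,m_{Y,S}(P),
\]
not the full height $h_Y(P)=m_{Y,S}(P)+N_{Y,S}(P)$. Working directly on $V$ with a filtration of $H^0(V,NA)$, the Ru--Vojta machine outputs an inequality of the form $\sum m_{D_i,S}(P)\le(\gamma+\epsilon)h_{\mathcal L}(P)$ with $\mathcal L=\mathcal O(NA)$; no matter how you weight the filtration, the right-hand side is $N h_A(P)$ and contains no $h_Y$ term. The paper's key move, which you are missing, is to \emph{blow up} $V$ along $Y$: on $\widetilde V$ the subscheme $Y$ becomes the exceptional \emph{divisor} $E$, and one applies Ru--Vojta with the line sheaf $\mathcal L=\mathcal O(\ell\pi^*D-E)$. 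Then $h_{\mathcal L}=\ell h_{\pi^*D}-h_E$ carries $-h_E=-h_Y\circ\pi$ on the right-hand side, and after rearranging (and using integrality to control $N_{D_i,S}$) one isolates $h_Y$. Without passing to the blowup there is no mechanism for the counting part $N_{Y,S}$ to enter the inequality.

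There are also two secondary issues. First, your Step~2 claim that Cohen--Macaulayness of $V$ forces $\mathcal I_Y$ to be locally generated by a regular sequence of length two is false; an arbitrary codimension~$\ge 2$ closed subscheme of a Cohen--Macaulay variety need not be a local complete intersection. The paper handles this by enlarging $Y$ to an auxiliary $Y'=H_1\cap H_2$ which \emph{is} lci of codimension exactly $2$, using that $h_Y\le h_{Y'}+O(1)$. Second, the role you assign to Cohen--Macaulayness (controlling $\dim V_N-\dim H^0(V,NA\otimes\mathcal I_Y^k)$) is not where it is actually used: in the paper it guarantees that ``in general position'' implies ``intersect properly'' for the strict transforms $\widetilde D_i$ on the blowup (needed to invoke Ru--Vojta), and that the blowup $\widetilde V$ is again Cohen--Macaulay.
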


Here, $\equiv$ denotes numerical equivalence of divisors, and the notion of $(D,S)$-integral points will be discussed in more detail in Section \ref{Preliminary}.

Given a finitely generated subgroup $\Gamma \subseteq \mathbb G^n_m(\overline{\mathbb Q})$ and a closed subscheme $Y\subset\mathbb P^n$ defined over a number field, there exist a number field $k$ and a finite set of places $S$ of $k$ (containing the archimedean places) such that $\Gamma \subseteq \mathbb G^n_m(\mathcal O_S)$ and $Y$ is defined over $k$.  Therefore, we may replace $\Gamma$ in Theorem \ref{LevinTheorem 1.16}  by $\mathbb G^n_m(\mathcal O_S)$ and assume that $Y$ is defined over $k$.
Let $D_1:=\{x_0=0\},\hdots,D_{n+1}:=\{x_n=0\}$ be the divisors given by the coordinate hyperplanes.  Then the set \eqref{eq:intersection} is exactly the set of points $\{P_0,\hdots,P_n\}$ in Theorem \ref{LevinTheorem 1.16}.  Moreover, the set $\mathbb G_m^n(\mathcal O_S)$ is a set of $(\sum_{i=1}^{n+1} D_i,S)$-integral points on $\mathbb P^n(k)$.  Since the Zariski closure of a subset of $\Gamma$ is a finite union of translates of algebraic subgroups by Laurent \cite{laurent}, Theorem \ref{LevinTheorem 1.16} is a direct consequence of Theorem \ref{heightEnumerical}.  With a bit more work, the results of \cite{yasuams} can also be seen as special cases of Theorem \ref{heightEnumerical}.

We note that in \cite{levin_gcd}, Levin divides his argument into the ``counting function part'' $N_{Y,S}$ and the ``proximity function part'' $m_{Y,S}$ (see Section \ref{Preliminary} for the definitions of $N_{Y,S}$ and $m_{Y,S}$).  The $N_{Y,S}$ part is the main work (see \cite[Theorem 4.3]{levin_gcd}), and for a closed subscheme $Y$ of $\mathbb P^n$ of codimension at least 2, he shows that
\begin{align}\label{eq:aaronscounting}
N_{Y,S}(P)\le   \epsilon h(P) +O(1)
\end{align}
for all $P\in  \mathbb G_m^n(\mathcal O_S)$ outside a Zariski-closed proper subset.
Our main argument needs to consider $h_Y$, i.e. combining the $N_{Y,S}$ part and the $m_{Y,S}$ part together.  Therefore, we recover \eqref{eq:aaronscounting} with the additional assumption that $Y$ does not intersect the set \eqref{eq:intersection}.
However, when $Y$ contains only points, our method also works without this assumption.  In particular, it recovers a result of Corvaja-Zannier for $n=2$ \cite[Proposition 4]{Corvaja:Zannier:2005}, which is a consequence of the following.

\begin{theorem}\label{dim2}
 Let $V$ be a
 Cohen--Macaulay projective variety   of dimension $n$  defined over a number field $k$, and let $S$ be a finite set of places of $k$.   Let $D_1, \dots, D_{n} $ be    effective  Cartier divisor of $V$  defined over $k$  and in general position.   Suppose that there exists an ample Cartier divisor $A$ on $V$ and positive integer $d_i$ such that $D_i\equiv d_iA$ for all $1\le i\le n$.  Let $Y$ be a  closed subscheme of $V$ of dimension 0.
 Let $\epsilon>0$.  Then there exists a proper Zariski closed set $Z$ of $V(k)$ such that for any set $R$ of $(\sum_{i=1}^{n} D_i,S)$-integral points on $V$ we have
 \begin{align}\label{eq:countingpart}
N_{Y,S}(P)\le   \epsilon h_A(P)+O(1)
\end{align}
for all for all $P\in R \setminus Z$.
\end{theorem}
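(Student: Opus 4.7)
The plan is to reduce the problem to bounding $N_{\{y\}, S}(P)$ for each closed point $y$ in the support of $Y$ separately, and then to split on whether $y$ lies on some $D_i$. Since $Y$ is zero-dimensional, additivity of local Weil functions gives $N_{Y, S}(P) = \sum_y \ell(y)\, N_{\{y\}, S}(P) + O(1)$ with $\ell(y) = \mathrm{length}\,\mathcal O_{Y, y}$, so after rescaling $\epsilon$ it suffices to establish the bound for each reduced closed point $y$. If $y \in \supp D_i$ for some $i$, the functorial inequality $\lambda_{\{y\}, v} \le \lambda_{D_i, v} + O_v(1)$ (coming from the inclusion of the ideal sheaf of $D_i$ into that of $\{y\}$ near $y$), combined with the $(D_i, S)$-integrality of $R$, yields $N_{\{y\}, S}(P) = O(1)$, which is far stronger than required.

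For the substantial case $y \notin \bigcup_{i=1}^n \supp D_i$, I would manufacture an auxiliary effective Cartier divisor $D_{n+1}^{(y)}$ passing through $y$ with $D_{n+1}^{(y)} \equiv d_{n+1} A$ for some positive integer $d_{n+1}$, chosen so that $D_1, \ldots, D_n, D_{n+1}^{(y)}$ are in general position on $V$ and $\{y\}$ avoids the critical set \eqref{eq:intersection} formed from this $(n+1)$-tuple. Taking $d_{n+1}$ large enough that $d_{n+1}A$ is very ample, a generic section of $\mathcal O_V(d_{n+1}A)$ vanishing at $y$ defines such a $D_{n+1}^{(y)}$: the only nontrivial constraint is to miss the zero-dimensional locus $\bigcap_{i=1}^n D_i$, while the $(n-1)$-fold intersections of the $D_i$'s with $D_{n+1}^{(y)}$ avoid $y$ automatically because $y \notin \bigcup_i D_i$. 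Thus the configuration $(D_1, \ldots, D_n, D_{n+1}^{(y)})$ satisfies every hypothesis of Theorem \ref{heightEnumerical} for the subscheme $\{y\}$, except that $R$ is only $(\sum_{i=1}^n D_i, S)$-integral and not $(\sum_{i=1}^{n+1} D_i^{(y)}, S)$-integral.

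The final step is to re-run the proof of Theorem \ref{heightEnumerical}---Silverman's blowup technique applied in concert with the Ru--Vojta subspace theorem on the blowup $\tilde V \to V$ at $y$---under this weakened integrality. In that argument, the integrality of $R$ with respect to each $D_j$ is used precisely to promote $m_{D_j, S}(P)$ to $h_{D_j}(P) + O(1) = d_j h_A(P) + O(1)$. Dropping it for the single auxiliary divisor $D_{n+1}^{(y)}$ and substituting the trivial estimate $m_{D_{n+1}^{(y)}, S}(P) \le d_{n+1} h_A(P) + O(1)$ introduces an error that is exactly absorbed by the proximity part $m_{\{y\}, S}(P)$: the resulting inequality delivers the desired bound on the counting function $N_{\{y\}, S}(P) = h_{\{y\}}(P) - m_{\{y\}, S}(P) + O(1)$, though no longer on the full height $h_{\{y\}}(P)$, which is exactly why the statement is phrased in terms of $N_{Y, S}$ rather than $h_Y$. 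The main obstacle I foresee is the careful bookkeeping of $\beta$-constants on $\tilde V$ and the proximity-counting decomposition, so as to guarantee that the right-hand side coefficient can indeed be made less than any prescribed $\epsilon$; afterward, the proper Zariski closed set $Z$ is assembled as the finite union of the exceptional sets $Z_y$ produced for each $y \in \supp Y$.
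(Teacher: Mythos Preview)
The statement as printed contains a typo that you have taken at face value: the paper intends $n{+}1$ divisors $D_1,\dots,D_{n+1}$, not $n$.  This is visible both in the paper's own proof (which ranges over ``$1\le i\le n{+}1$'' and invokes Theorem~\ref{Key}, a result formulated for $n{+}1$ divisors) and in the application to Corollary~\ref{CorvajaZannier}, where three coordinate hyperplanes are used in $\mathbb P^2$.

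With this correction the paper's argument is much shorter than yours.  After the same reduction to a single reduced point $Q$, the paper simply observes that if $Q\notin\bigcup_{i=1}^{n+1}\supp D_i$ then $Q$ automatically lies outside the set~\eqref{eq:intersection} (which is contained in $\bigcup_i\supp D_i$), so Theorem~\ref{Key} applies \emph{as stated} and gives the stronger bound $N_{Q,S}(P)\le h_Q(P)\le\epsilon\,h_A(P)+O(1)$.  No auxiliary divisor, and no reopening of the Ru--Vojta computation, is required.

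Your attempt to prove the literal $n$-divisor statement has a genuine gap in the final ``absorption'' step.  After dropping integrality for the auxiliary $D_{n+1}^{(y)}$, the inequality coming from \S\ref{sec:proofofKey} reads
\[
\tfrac1\ell\,h_{\{y\}}(P)\;\le\;O\!\left(\tfrac1{\ell\sqrt\ell}\right)h_D(P)\;+\;N_{D_{n+1}^{(y)},S}(P)\;+\;O(1),
\]
so that $h_{\{y\}}(P)\le O(\ell^{-1/2})\,h_D(P)+\ell\,N_{D_{n+1}^{(y)},S}(P)+O(1)$.  The term $\ell\,N_{D_{n+1}^{(y)},S}(P)$ carries a large coefficient and cannot be cancelled by $m_{\{y\},S}(P)$: the inclusion $\{y\}\subset D_{n+1}^{(y)}$ only gives $\lambda_{\{y\},v}\le\lambda_{D_{n+1}^{(y)},v}$, hence $m_{\{y\},S}\le m_{D_{n+1}^{(y)},S}$ and $N_{\{y\},S}\le N_{D_{n+1}^{(y)},S}$, which are the wrong directions for the cancellation you claim.  (Equivalently, the ``trivial estimate'' $m_{D_{n+1}^{(y)},S}(P)\le d_{n+1}h_A(P)+O(1)$ is an \emph{upper} bound on a quantity that sits on the left-hand side of the Ru--Vojta inequality, where a lower bound is what is needed.)
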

\begin{cor}[Corvaja-Zannier \cite{Corvaja:Zannier:2005}]\label{CorvajaZannier}
Let $k$ be a number field and $S$ a finite set of places of $k$ containing the archimedean places.  Let $f(X,Y), g(X,Y)\in k[X,Y]$ be nonconstant coprime polynomials.  For all $\epsilon>0$, there exists a finite union $Z$ of translates of proper algebraic subgroups of $\mathbb G_m^2$ such that
$$
-\sum_{v\in M_k\setminus S}\log^-\max\{|f(x,y)|_v, |g(x,y)|_v\}<\epsilon\max\{h(x),h(y)\}
$$
for all $(x,y)\in \mathbb G_m^2(\mathcal O_S)\setminus Z$.
\end{cor}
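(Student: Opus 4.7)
The plan is to specialize Theorem \ref{dim2} to $V = \mathbb P^2$ (smooth, hence Cohen--Macaulay, of dimension $n = 2$). Take $A = \mathcal O_{\mathbb P^2}(1)$, and put $D_1 = \{X_0 = 0\} + \{X_1 = 0\}$ and $D_2 = \{X_2 = 0\}$, so $D_1 \equiv 2A$, $D_2 \equiv A$, and the two are in general position since $D_1 \cap D_2 = \{[0{:}1{:}0],[1{:}0{:}0]\}$ has the expected codimension $2$. The complement $V \setminus \supp(D_1 + D_2)$ is the standard torus with affine coordinates $x = X_0/X_2$, $y = X_1/X_2$, so $\mathbb G_m^2(\mathcal O_S)$ is tautologically a set of $(D_1 + D_2, S)$-integral points on $V$. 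Let $F, G \in k[X_0, X_1, X_2]$ denote the homogenizations of $f, g$; one checks (using that neither $X_2 \mid F, G$ nor any positive-degree common factor of $F, G$ could restrict to a nontrivial common factor of $f, g$) that $F, G$ are coprime, so $Y := V(F, G) \subset \mathbb P^2$ is a $0$-dimensional closed subscheme.

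Theorem \ref{dim2} then yields, for each $\epsilon' > 0$, a proper Zariski closed $Z_0 \subsetneq \mathbb P^2$ with $N_{Y, S}(P) \le \epsilon' h(P) + O(1)$ for all $P \in \mathbb G_m^2(\mathcal O_S) \setminus Z_0$, where $h$ is the standard absolute logarithmic height on $\mathbb P^2$. The key identification is that $N_{Y, S}(P)$ agrees with the left-hand side of the target inequality up to $O(1)$. On the affine chart $\{X_2 \neq 0\}$, $Y$ is scheme-theoretically cut out by $f, g$, so one may take the local Weil function $\lambda_{Y, v}(P) = -\log\max(|f(x, y)|_v, |g(x, y)|_v)$. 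Using the decomposition $\log = \log^+ + \log^-$ gives
\[
-\sum_{v \notin S}\log^-\max(|f(x,y)|_v, |g(x,y)|_v) \;=\; N_{Y, S}(P) + \sum_{v \notin S}\log^+\max(|f(x,y)|_v, |g(x,y)|_v) + O(1).
\]
For $P = (x, y) \in \mathbb G_m^2(\mathcal O_S)$ and $v \notin S$, one has $|x|_v = |y|_v = 1$, hence $|f(x,y)|_v, |g(x,y)|_v \le \max_{i,j}|a_{ij}|_v$, and this maximum exceeds $1$ at only finitely many $v$ (those where some coefficient of $f$ or $g$ fails to be $v$-integral); at such places the contribution is uniformly bounded, so the last sum is $O(1)$.

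Combining with the elementary bound $h([x{:}y{:}1]) \le 2\max\{h(x), h(y)\}$ and choosing $\epsilon' = \epsilon/2$ produces the target inequality modulo an additive $O(1)$. This $O(1)$ is absorbed by adjoining to the exceptional set the (finite, by Northcott) collection of points in $\mathbb G_m^2(k)$ of small height. Finally, since $\mathbb G_m^2(\mathcal O_S)$ is finitely generated by Dirichlet's unit theorem, Laurent's theorem applied to $Z_0 \cap \mathbb G_m^2(\mathcal O_S)$ (as already invoked after Theorem \ref{LevinTheorem 1.16}) replaces this intersection by its Zariski closure in $\mathbb G_m^2$, a finite union of translates of proper algebraic subgroups; this is the $Z$ in the statement. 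The main obstacle in the plan is the local-height identification in the second paragraph: one must choose consistent local Weil functions so that the contributions from the finite set of places of ``bad reduction'' of the coefficients of $f, g$ sum to only an $O(1)$ term, rather than a quantity that could grow with the point $P$.
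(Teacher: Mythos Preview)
Your proof is correct and follows essentially the same route as the paper: apply Theorem~\ref{dim2} on $\mathbb P^2$ to the zero-scheme of the homogenizations of $f,g$, identify the left-hand side with $N_{Y,S}$ up to $O(1)$ via the $S$-unit condition $|x|_v=|y|_v=1$ for $v\notin S$, and finish with Laurent. The only cosmetic differences are that the paper first replaces $f,g$ by powers to equalize their degrees and takes the three coordinate hyperplanes as separate divisors $D_1,D_2,D_3$ (consistent with the $n{+}1$ divisors actually used in the proof of Theorem~\ref{dim2} via Theorem~\ref{Key}), whereas you combine two of them into a single $D_1$; neither change affects the argument.
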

We note that Corvaja-Zannier also showed that  the translate of one dimensional subgroups of $\mathbb G_m^2$ in $Z$ as stated in Corollary \ref{CorvajaZannier} can be determined effectively.

To further illustrate the ideas of our work, let us first recall the following conjecture of Vojta. (See \cite[Chapter 15]{Vojta}).
\begin{conjecture}[Vojta's Main Conjecture]
Let $k$ be a number field, let $S$ be a finite set of places of $k$.  Let $X$ be a smooth projective variety over $k$.  Let $A$ be an ample divisor on $X$, let $D$ be a simple normal crossing divisor on $X$ and let $K_X$ be a canonical divisor on $X$.
Then for every $\epsilon>0$ there exists a proper Zariski closed subset $Z=Z(X,D,A,\epsilon)$ and a constant $C_{\epsilon}(X,D,A,\epsilon)\in\mathbb R$ the inequality
\begin{align}\label{eqVojtaBound}
  m_{D,S}( P)+h_{K_X}(P)\le \epsilon  h_{A}(P)+C_{\epsilon}
\end{align}
holds for all $P\in  X(k)\setminus Z$.
\end{conjecture}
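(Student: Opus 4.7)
The statement to be proved is Vojta's Main Conjecture, which is one of the central open problems of Diophantine geometry; it implies the Mordell conjecture, the Bombieri--Lang conjecture, Schmidt's subspace theorem, and (essentially) the $abc$-conjecture. No proof is known, so the best I can honestly offer is the general Schmidt-theoretic strategy that has produced every currently known partial case, including the results of the present paper.

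The plan is to convert the inequality into a statement about carefully chosen global sections. First I would fix a large integer $N$ and form the line bundle $L_N$ whose sections model $N(K_X + D)$, blowing up and twisting slightly by $A$ if needed so that $L_N$ is effective and generically base-point free. Setting $V_N := H^0(X, L_N)$, for each $v \in S$ and each component $D_i$ of $D$ I would filter $V_N$ by order of vanishing along $D_i$ and choose bases adapted at $v$. Both $h_{K_X}(P)$ and $m_{D,S}(P)$ can then be rewritten, up to $O(1)$, as a sum of local heights $\sum_{v,j} \lambda_v(s_{v,j}, P)$ of global sections of $L_N$. Applying the Ru--Vojta form of Schmidt's subspace theorem then yields an inequality of the shape
\begin{equation}
m_{D,S}(P) + h_{K_X}(P) \le \bigl(\beta_N + \epsilon\bigr)\, h_A(P) + O(1)
\end{equation}
outside a proper Zariski-closed set, where $\beta_N$ is an Okounkov-body/Seshadri-type asymptotic constant attached to the filtrations and to $A$.

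It remains to show $\beta_N \to 0$ as $N \to \infty$. By a Hilbert--Samuel / Newton--Okounkov body computation, this reduces to volume estimates of the form $\operatorname{vol}(L_N - tND_i) \ge (1-\epsilon)\operatorname{vol}(L_N)$ for suitable $t$, which in turn rely on Kawamata--Viehweg vanishing and delicate control of volumes of big line bundles along the boundary components. When $X$ has many automorphisms (tori, abelian varieties, and products thereof) or when $K_X$ is trivial, such estimates can sometimes be pushed through; for a general $(X,D)$ in dimension $\ge 3$, however, they cannot, because in the Ru--Vojta framework the required $\beta_N$-bound is essentially equivalent to Vojta's conjecture itself.

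The genuine obstacle is therefore this $\beta_N$-estimate: no purely combinatorial refinement of Schmidt's theorem will suffice, and a new geometric ingredient seems to be necessary, perhaps an arithmetic analogue of a Nevanlinna-style second main theorem for holomorphic maps $\mathbb{C}^n \to X$, or a systematic use of jet differentials in the spirit of Demailly--Siu. For precisely this reason, the paper presents the conjecture only as motivation and proves instead the restricted, codimension-$\ge 2$ GCD analogue stated in Theorems \ref{heightEnumerical} and \ref{dim2}.
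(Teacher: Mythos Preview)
The statement is presented in the paper as a \emph{conjecture}, not a theorem, and the paper makes no attempt to prove it; it is invoked only as motivation, with the Ru--Vojta theorem serving as a partial substitute in the derivation of \eqref{eqVojtaBoundintro} from \eqref{VojtaconjectureP}. You correctly recognize this, and your account of why Schmidt-type methods cannot close the gap is accurate and consistent with the paper's own discussion.
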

In \cite[Theorem 6]{Silverman2005}, Silverman considered Vojta's conjecture on the blowup $\pi:  \widetilde X\to X$ of $X$ along $Y$, where $X$ is a smooth variety over $k$, $Y\subset X$ is a smooth subvariety of codimension $r\ge 2$ under the assumption that $-K_X$ is a simple normal crossings divisor and $Y$ intersects the support of $K_X$ transversally (see also \cite{mck,yasumona,yasuforum}).  The key point of our method is to replace Vojta's conjecture on $ \widetilde X$ by a more general version of Schmidt's subspace theorem obtained recently by Ru and Vojta in \cite{ruvojta}.

To illustrate the idea, we consider the particular case when $X=\mathbb P^n$, $-K_X= \sum_{i=0}^n H_i$, where $H_i=\{X_i=0\}$ is the divisor of the coordinate hyperplane. Applying Vojta's conjecture with $D=-\pi^*K_X= \sum_{i=0}^{n} \pi^* H_i$   and ample divisor $\pi^*A-\frac 1{\ell} E$ on $\widetilde X$, where $A=H_0$ and $\ell$ is a   sufficiently  large integer,
the equation \eqref{eqVojtaBound} becomes
\begin{align}\label{VojtaconjectureP}
\sum_{i=0}^n m_{\pi^* H_i,S}( Q) +(r-1+  \frac \epsilon\ell)h_E(Q) \le  (n+1+\epsilon) h_{\pi^* A}(Q) +C_{\epsilon}.
\end{align}

In contrast,  our computation in this situation (see equations \eqref{eq:fromruvojta} and \eqref{eq:replaceD}) shows that the result of \cite{ruvojta} (see Theorem \ref{Ru-Vojta}) implies that for a sufficiently large $\ell$, there exists a proper closed subset $ \widetilde Z\subset  \widetilde X$ such that
\begin{align}\label{eqVojtaBoundintro}
\sum_{i=0}^n m_{\pi^* H_i,S}( Q)+   \frac 1\ell h_E(Q) \le   (n+1 +O(\frac 1{\ell \sqrt \ell}))  h_{\pi^* A}(Q) + C
\end{align}
for all $Q\in    \widetilde X(k)\setminus  \widetilde Z$ provided the supports of $\pi^* H_0,\hdots,\pi^* H_{n}$ are in general position.   We note that this assumption is weaker than what is required for \cite{Silverman2005} since for
example $Y$ does not have to be nonsingular at the intersection points with $H_i$.  While the inequality \eqref{eqVojtaBoundintro} is weaker than \eqref{VojtaconjectureP} deriving from Vojta's conjecture (the coefficient for $h_E$ is smaller),
the estimate \eqref{eqVojtaBoundintro} is still sufficient for our goal of analyzing integral points.  To see this, by functoriality and using the assumption that $P =\pi(Q)$ is  $(\sum_{i=0}^{n} H_i,S)$-integral,
\begin{align}\label{eqVojtaBoundintroY}
  \sum_{i=0}^n m_{\pi^* H_i,S}( Q) = \sum_{i=0}^n m_{H_i,S}( P) +O(1) =(n+1)h_{A}(P)+O(1).
\end{align}
So it follows from \eqref{eqVojtaBoundintro} that
\begin{align}\label{eqVojtaBoundintroY2}
h_Y(P) = h_E(Q) \le   O(\frac 1{\sqrt \ell})  h_{A}(P)+O(1)
\end{align}
for all $P$ in a set of $(\sum_{i=0}^{n} H_i,S)$-integral points outside $\pi( \widetilde Z)$ of $ \mathbb P^n(k)$.
We also note that the idea of replacing Vojta's conjecture  by  the result of Ru and Vojta in \cite{ruvojta} along the argument of Silverman \cite{Silverman2005} first appeared in \cite{GuoWang},    where they apply the complex part of   Ru and Vojta in \cite{ruvojta} for the blowup $ \widetilde X$ of $X=\mathbb P^1\times \mathbb P^1$ along $Y=(1,1)$ and the ample divisor $ \widetilde A:=-\pi^* K_X-E=\pi^*(\{0\}\times\mathbb P^1)+\pi^*(\{\infty\}\times\mathbb P^1)+\pi^*(\mathbb P^1\times\{0\})+\pi^*(\mathbb P^1\times\{\infty\})-E$.(See   \cite[Corollary 1.11]{ruvojta}   for further discussion in the arithmetic side.)

Finally, we discuss the analogous topics and results in Nevanlinna theory.
 A starting point for such results comes from the study of holomorphic curves in semi-abelian varieties by Noguchi, Winkelmann, and Yamanoi, who proved the following:

\begin{theorem}[Noguchi, Winkelmann, Yamanoi {\cite[Th.~5.1]{NWY} (see also \cite[Section 6.5]{NWbook})}]
\label{tNWY}
Let $f:\mathbb{C}\to A$ be a holomorphic map to a semi-abelian variety $A$ with Zariski-dense image. Let $Y$ be a closed subscheme of $A$ with $\codim Y\geq 2$ and let $\epsilon>0$.
\begin{enumerate}
\item[{\rm (i)}]  Then
\begin{align*}
N_{f}(Y,r)\leq_{\rm exc} \epsilon T_f(r).
\end{align*}
\item[{\rm (ii)}] There exists a compactification $\overline{A}$ of $A$, independent of $\epsilon$, such that for the Zariski closure $\overline{Y}$ of $Y$ in $\overline{A}$,
\begin{align*}
T_{\overline{Y},f}(r)\leq_{\rm exc} \epsilon T_f(r).
\end{align*}
\end{enumerate}
\end{theorem}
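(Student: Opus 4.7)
The plan is to follow the strategy of Noguchi--Winkelmann--Yamanoi, which exploits two structural features of semi-abelian targets: the jet bundles $J_k(A)$ are trivial of the form $A \times \mathbb{C}^{kn}$, so $k$-jet lifts of $f$ again land in (trivial bundles over) semi-abelian varieties; and by Bloch--Ochiai--Kawamata, the Zariski closure of any holomorphic image in a semi-abelian variety is a translate of a semi-abelian subvariety. Together these facts permit an induction on $\dim Y$, ultimately reducing the problem to the divisor-type Second Main Theorem on semi-abelian varieties proved earlier by NWY.

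For part (i), I would induct on $\dim Y$, first reducing to $Y$ irreducible. In the base case $\dim Y = 0$, one chooses a smooth equivariant compactification $\overline{A}$, blows up the points of $Y$ to obtain exceptional divisors $E$, and applies the NWY logarithmic SMT on the blowup with boundary $\partial A + E$; the hypothesis $\codim Y \geq 2$ guarantees that $E$ is not contained in the boundary at infinity, so the divisor-type estimate translates via functoriality into $N_f(Y,r) \leq_{\rm exc} \epsilon T_f(r)$. For the inductive step $\dim Y \geq 1$, the key move is to pass to the $k$-jet lift $j_k f : \mathbb{C} \to J_k(A)$, let $X_k$ denote the Zariski closure of its image, and observe that $X_k$ is a translate of a semi-abelian subvariety of $J_k(A)$ by Bloch--Ochiai--Kawamata. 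Denoting by $\pi : J_k(A) \to A$ the projection, one has $N_{j_k f}(\pi^{-1}(Y) \cap X_k, r) = N_f(Y,r) + O(1)$; meanwhile the codimension-in-target geometry improves enough, for $k$ chosen depending on the geometry of $Y$, that the inductive hypothesis applied on $X_k$ closes the loop.

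For part (ii), I would fix, independently of $\epsilon$, a smooth equivariant compactification $\overline{A}$ whose boundary $\partial A = \overline{A} \setminus A$ is simple normal crossings. Because $f(\mathbb{C}) \subset A$, the counting function on the compactification satisfies $N_f(\overline{Y},r) = N_f(Y,r)$, which is controlled by part (i). The First Main Theorem decomposes $T_{\overline{Y},f}(r) = N_f(\overline{Y},r) + m_f(\overline{Y},r) + O(1)$, so the task reduces to showing $m_f(\overline{Y},r) = o(T_f(r))$. This proximity estimate follows from the Nevanlinna lemma on the logarithmic derivative adapted to holomorphic maps into semi-abelian varieties, which exploits the group law and the SNC boundary to bound $m_f(\overline{Y},r)$ by a small multiple of $T_f(r)$ plus $O(\log r)$.

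The most delicate step will be the jet-induction in part (i): one must verify that the augmented triple $(X_k, \pi^{-1}(Y) \cap X_k, j_k f)$ again satisfies the Zariski-density and codimension hypotheses of the theorem, that $T_{j_k f}(r)$ is comparable to $T_f(r)$ up to negligible error, and that the induction terminates at a divisor case after finitely many jet-order augmentations. The essential tool for ensuring termination is Kawamata's structure theorem, which guarantees that for $k$ large enough, $X_k$ acquires enough extra directions transverse to $\pi^{-1}(Y)$ that the pullback of $Y$ is effectively controlled by a single logarithmic jet coordinate, at which point the NWY divisor SMT delivers the bound.
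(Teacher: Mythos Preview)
The paper does not prove this theorem. Theorem~\ref{tNWY} is quoted from \cite[Th.~5.1]{NWY} (and \cite[Section 6.5]{NWbook}) purely as background and motivation for the authors' own results; no proof is supplied anywhere in the paper. Consequently there is no ``paper's own proof'' against which to compare your proposal.

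That said, your sketch is broadly in the spirit of the Noguchi--Winkelmann--Yamanoi argument (jet lifts, Bloch--Ochiai--Kawamata, reduction to the divisor Second Main Theorem on semi-abelian varieties), so it is not unreasonable as an outline of how the cited reference proceeds. But since the present paper neither reproduces nor adapts that argument, any detailed assessment of your inductive mechanism---in particular whether the codimension really improves under the jet lift and whether $T_{j_k f}(r)$ stays comparable to $T_f(r)$---would have to be checked against \cite{NWY} itself rather than against anything in this paper.
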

Here $N_f(Y,r)$ is a counting function associated to $f$ and $Y$, $T_{\overline{Y},f}(r)$ is a Nevanlinna characteristic (or height) function associated to $f$ and $\overline{Y}$, and $T_f(r)$ is any characteristic function associated to an appropriate ample line bundle (see \cite[Section 2]{levinwang} for the relevant definitions from Nevanlinna theory and \cite{NWY} for more discussion).  The notation $ \le_{\rm exc}$ means that the estimate holds for all $r$ outside a set of finite Lebesgue measure, possibly depending on $\epsilon$.
More generally, Noguchi, Winkelmann, and Yamanoi proved a result for $k$-jet lifts of holomorphic maps to semi-abelian varieties.  The case when $A$ is an abelian variety was proved by Yamanoi \cite{Yam}.

In their recent work, Levin and the first named author obtain a refinement and a new proof of Theorem \ref{tNWY} when $A=(\mathbb{C}^*)^n$ is the complex algebraic torus \cite[Theorem 1.2]{levinwang} by adapting the number-theoretical arguments of \cite{levin_gcd} to Nevanlinna theory.
Their result in this case is a complex counter parts of Theorem  \ref{LevinTheorem 1.16}.   They also prove  asymptotic gcd estimates for holomorphic maps in more general context (see  \cite[Theorem 1.8]{levinwang}). We  refer to \cite{levinwang} and \cite{PWgcd}  for further discussion on the work of gcd problems.  The proofs of our arithmetic results depend on the general version of Schmidt's subspace theorem by Ru and Vojta in \cite{ruvojta} and Levin's Subspace theorem for algebraically equivalent divisors \cite[Theorem 3.2]{levin_duke}.  The complex counter part of both results exist. (See  \cite[General theorem (analytic part)]{ruvojta} and \cite{levin_duke}.)   The formal properties of Weil functions and height functions also carry over to the corresponding functions in Nevanlinna theory.  Therefore,  the following complex result can be derived in parallel to the arithmetic case. We  refer to \cite{levinwang} for notation and basic results in Nevanlinna theory, and will omit details.

\begin{theorem}\label{cxheightEnumerical}
 Let $V$ be a   Cohen--Macaulay complex projective variety   of dimension $n$.   Let $D_1, \dots, D_{n+1} $ be    effective  Cartier divisor of $V$ in general position.   Suppose that there exists an ample Cartier divisor $A$ on $V$ and positive integer $d_i$ such that $D_i\equiv d_iA$ for all $1\le i\le n+1$.  Let $Y$ be a  closed subscheme of $V$ of codimension at least 2 that does not contain any  point  of the set $\cup_{i=1}^{n+1}( \cap_{1\le j\ne i\le n+1}\supp D_j)$.
 Let $f: \mathbb C\to V\setminus ( \cup_{i=1}^{n+1}\supp D_i)$ be a holomorphic  mapping with    Zariski-dense image.  Let $\epsilon>0$.  Then  for a given $\epsilon>0$,  we have
  \begin{align*}
T_{Y,f}(r)\leq_{\rm exc}   \epsilon T_{A,f}(r).
\end{align*}
\end{theorem}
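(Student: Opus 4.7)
The plan is to mirror the arithmetic proof of Theorem~\ref{heightEnumerical}, substituting the Schmidt-type results of Ru--Vojta and of Levin by their Nevanlinna analogues, as indicated at the end of the introduction. Form the blowup $\pi:\widetilde V\to V$ along $Y$, and let $E$ denote its exceptional divisor. Since $\mathbb C$ is one dimensional and $f$ has Zariski-dense image, $f^{-1}(Y)$ is a discrete subset of $\mathbb C$, so by properness of $\pi$ we obtain a holomorphic lift $\widetilde f:\mathbb C\to\widetilde V$ whose image is again Zariski-dense. By functoriality of Nevanlinna characteristic functions, $T_{Y,f}(r)=T_{E,\widetilde f}(r)+O(1)$, so it suffices to bound $T_{E,\widetilde f}(r)$.

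For a sufficiently large integer $\ell$, the divisor $\widetilde A_\ell:=\pi^*A-\tfrac{1}{\ell}E$ is ample on $\widetilde V$. Applying the complex (analytic) version of the Ru--Vojta general theorem to the effective divisors $\pi^*D_1,\dots,\pi^*D_{n+1}$ on $\widetilde V$ relative to $\widetilde A_\ell$, together with the complex counterpart of Levin's theorem on algebraically equivalent divisors to accommodate the numerical (rather than linear) equivalences $D_i\equiv d_iA$, one obtains, exactly as in the derivation of \eqref{eqVojtaBoundintro}, an inequality
\begin{equation*}
\sum_{i=1}^{n+1} m_{\pi^*D_i,\widetilde f}(r)+\tfrac{1}{\ell}T_{E,\widetilde f}(r)\leq_{\rm exc}\Bigl(\sum_{i=1}^{n+1}d_i+O\!\bigl(\tfrac{1}{\ell\sqrt{\ell}}\bigr)\Bigr)T_{A,f}(r)+o(T_{A,f}(r)).
\end{equation*}
The general-position hypothesis on the $D_i$, together with the assumption that $Y$ misses the set in \eqref{eq:intersection}, guarantees that the supports of the $\pi^*D_i$ on $\widetilde V$ satisfy the general-position condition required by Ru--Vojta.

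Since $f(\mathbb C)\cap\supp D_i=\varnothing$ for each $i$, the counting function $N_f(D_i,r)$ vanishes, so the first main theorem gives $m_{D_i,f}(r)=T_{D_i,f}(r)+O(1)=d_iT_{A,f}(r)+o(T_{A,f}(r))$, where the last equality uses that $D_i-d_iA\equiv 0$. Functoriality yields $m_{\pi^*D_i,\widetilde f}(r)=m_{D_i,f}(r)+O(1)$, and summing in $i$ cancels the leading term of the displayed inequality. We are left with
\begin{equation*}
\tfrac{1}{\ell}T_{E,\widetilde f}(r)\leq_{\rm exc}O\!\bigl(\tfrac{1}{\sqrt{\ell}}\bigr)T_{A,f}(r)+o(T_{A,f}(r)),
\end{equation*}
from which $T_{Y,f}(r)\leq_{\rm exc}\epsilon T_{A,f}(r)$ follows by choosing $\ell$ sufficiently large.

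The main obstacle is the precise formulation of the Ru--Vojta estimate on the blowup $\widetilde V$: one has to verify that the Nochka-type weights and the general-position arguments used in the proof of Theorem~\ref{heightEnumerical} transcribe verbatim to their Nevanlinna counterparts, and that the weak general-position hypothesis on the $\pi^*D_i$ near $E$ suffices. Since the complex analogues of both the Ru--Vojta theorem and Levin's theorem on algebraically equivalent divisors are available, and the formal properties of Weil and height functions carry over to the corresponding proximity and characteristic functions in Nevanlinna theory, this transcription is expected to be essentially mechanical.
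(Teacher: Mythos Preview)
The paper does not give a separate proof of this theorem; it simply asserts that the argument for the arithmetic Theorem~\ref{heightEnumerical} transcribes to Nevanlinna theory via the analytic parts of Ru--Vojta and of Levin's theorem, and omits details. Your proposal correctly identifies the central mechanism (Ru--Vojta on the blowup along $Y$, then cancel the proximity terms using that $f$ omits the $D_i$), and your sketch matches the illustrative computation \eqref{eqVojtaBoundintro}--\eqref{eqVojtaBoundintroY2} in the introduction.

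There is, however, a structural gap in your transcription of the arithmetic argument. The proof of Theorem~\ref{heightEnumerical} in Section~\ref{twomaintheorems} splits into two cases: if $\supp Y\subset\supp D_i$ for some $i$, the counting part is trivially bounded by integrality and the proximity part is bounded by Theorem~\ref{proximityEnumerical} (Levin's numerically-equivalent subspace theorem applied on $V$, \emph{not} on the blowup); only when $\supp Y$ lies in no $\supp D_i$ does one pass to $\widetilde V$ (Theorem~\ref{Key}). This split is forced: when $Y\subset D_i$, the pullback $\pi^*D_i$ contains $E$ as a component, so the $\pi^*D_j$ are no longer in general position on $\widetilde V$ and your Ru--Vojta step cannot be applied as stated. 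Your invocation of Levin's theorem ``to accommodate the numerical equivalences'' inside the Ru--Vojta estimate misplaces its role; in the paper it supplies the proximity bound in the first case (Corollary~\ref{proximityEnumericalDivisor}), while the numerical-equivalence slack in the second case is absorbed by Proposition~\ref{compareheight} (your $o(T_{A,f}(r))$ term plays the analogous role). You also omit the preliminary reduction at the start of Section~\ref{sec:proofofKey} replacing $Y$ by a codimension-two locally complete intersection $Y'\supset Y$ avoiding the set \eqref{eq:intersection}; this is what makes $\widetilde V$ Cohen--Macaulay (Proposition~\ref{CohenMacaulay}) and makes the general-position Claim for the $\pi^*D_i$ go through.
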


The relevant background material will be given in the next section.  In Section \ref{twomaintheorems},  we  formulate two main theorems: one (Theorem \ref{proximityEnumerical}) for the part of the proximity function of the closed subscheme $Y$ in Theorem  \ref{heightEnumerical}, and the other (Theorem \ref{Key}) to  estimate the entire height  function of $Y$ with an extra assumption that $Y$ is not contained in any of the $D_i$.  We will deduce our main results from these two theorems.  The idea for the second theorem  is what we have described above.  The proof will be given in Section \ref{countingsec} after a quick introduction of  the work of Ru--Vojta \cite{ruvojta}.
The first theorem dealing with the proximity function part relies on Levin's generalization to a numerical equivalent class  \cite[Theorem 3.2]{levin_duke} from a version of Schimdt's subspace theorem of Evertse and Ferretti \cite[Theorem 1.6]{ef_festschrift} for a linear equivalence class of divisors.
The proof will be given in Section \ref{Proximity}.

 \section{Notation and Background Material}\label{Preliminary}
 \subsection{Heights and integral points}
We refer to  \cite[Chapter 10]{Lang}, \cite[B.8]{HinSil},  \cite[Section 2.3]{levin_gcd} or \cite[Section 2]{Sil}  for more details about this  section.
Let $k$ be a number field and $M_{k}$ be the set of places, normalized so that it satisfies the product formula
$$
\prod_{v\in M_{k}}  |x|_v=1,\qquad \text{ for }  x\in k^\times \text{.}
$$
For a point $[x_0:\cdots:x_n]\in \mathbb P^n(k)$,
the standard logarithmic height is defined by
$$
h([x_0:\cdots:x_n])=\sum_{v\in M_{k}}\log \max\{|x_0|_v,\hdots,|x_n|_v\} .
$$
This quantity is independent of the choice of the homogeneous coordinates $x_0,\hdots,x_n$ by the product formula.

We also recall that an \emph{$M_k$-constant} is a family $\{\gamma_v\}_{v\in M_k}$, where each $\gamma_v$ is a real number with all but finitely many being zero.  Given two families $\{\lambda_{1v}\}$ and $\{\lambda_{2v}\}$ of functions parametrized by $M_k$, we say $\lambda_{1v} \le \lambda_{2v}$ holds up to an $M_k$-constant if there exists an $M_k$-constant $\{\gamma_v\}$ such that the function $\lambda_{2v} - \lambda_{1v}$ has values at least $\gamma_v$ everywhere.  We say $\lambda_{1v} = \lambda_{2v}$ up to an $M_k$-constant if $\lambda_{1v} \le \lambda_{2v}$ and $\lambda_{2v} \le \lambda_{1v}$ up to $M_k$-constants.

Let $V$ be projective variety defined over a number field $k$.  The classical theory of heights associates to every Cartier divisor $D$ on $V$ a \emph{height function} $h_D:V(k)\to \mathbb R$ and a \emph{local Weil function}  (or \emph{local height function})
$\lambda_{D,v}: V(k)\setminus \supp (D)\to \mathbb R$
for each $v\in M_k$, such that
$$
\sum_{v\in M_{k}}\lambda_{D,v}(P)=h_D(P)+O(1)
$$
for all $P\in V(k)\setminus \supp (D)$.

We also recall some basic properties of local Weil functions associated to closed subschemes from \cite[Section 2]{Sil}.
Given a closed subscheme $Y$ on a projective variety $V$ defined over $k$,
we can associate to each place $v\in M_k$ a function
$$
\lambda_{Y,v}: V\setminus \supp (Y)\to \mathbb R.
$$
Intuitively, for each $P\in V$ and $v\in M_k$
$$
\lambda_{Y,v}( P)=-\log(v\text{-adic distance from $P$ to $Y$}).
$$
To describe $\lambda_{Y,v}$ more precisely, we need the following lemma:
\begin{lemma}\label{representation}
Let $Y$ be a closed subscheme of $V$.  There exist effective divisors $D_1,\hdots,D_r$ such that
$$
Y=\cap D_i.
$$
\end{lemma}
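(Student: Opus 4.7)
The plan is to realize the ideal sheaf $\mathcal I_Y \subseteq \mathcal O_V$ as being cut out globally by sections of a sufficiently positive line bundle, and then take the zero divisors of those sections as the $D_i$. Concretely, since $V$ is projective and $\mathcal I_Y$ is a coherent $\mathcal O_V$-module, Serre's theorem on ample line bundles furnishes a very ample invertible sheaf $\mathcal L$ on $V$ (one can take a sufficiently high tensor power of any ample line bundle) for which $\mathcal L \otimes \mathcal I_Y$ is generated by its global sections.

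Having fixed such an $\mathcal L$, the next step is to select finitely many global sections $s_1, \dots, s_r \in H^0(V, \mathcal L \otimes \mathcal I_Y)$ that generate $\mathcal L \otimes \mathcal I_Y$ as an $\mathcal O_V$-module on all of $V$. Viewed in $H^0(V, \mathcal L)$, each $s_i$ vanishes along $Y$, and is nonzero as long as $Y \ne V$ (the case $Y=V$ being handled by an empty intersection). Hence each $s_i$ cuts out an effective Cartier divisor $D_i := \mathrm{div}(s_i)$ on $V$, and $Y \subseteq D_i$ as closed subschemes by construction. Only the reverse containment $\bigcap_i D_i \subseteq Y$ remains to be verified.

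This reverse containment is purely local. On any affine open $U \subseteq V$ over which $\mathcal L$ admits a trivialization, the sections $s_i$ correspond to regular functions $f_i \in \mathcal O_V(U)$, and the scheme-theoretic intersection $\bigcap_i D_i$ is cut out on $U$ by the sum of ideals $(f_1, \dots, f_r)$. The hypothesis that the $s_i$ generate $\mathcal L \otimes \mathcal I_Y$ translates, under the local trivialization, to the equality $(f_1, \dots, f_r) = \mathcal I_Y(U)$ inside $\mathcal O_V(U)$, so the two closed subschemes agree on $U$ and hence globally. The only nontrivial input in the whole argument is the global generation of $\mathcal L \otimes \mathcal I_Y$ after a large enough twist, which is a standard consequence of Serre's theorem; the remaining steps are a routine dictionary between sections of invertible sheaves, their associated effective Cartier divisors, and the corresponding ideal sheaves.
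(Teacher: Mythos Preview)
Your argument is correct and is the standard proof of this fact. The paper does not give its own proof but simply cites \cite[Lemma~2.2]{Sil}; Silverman's argument there is essentially the one you wrote---twist $\mathcal I_Y$ by a sufficiently ample line bundle so that it becomes globally generated, and take the zero schemes of a finite generating set of sections as the $D_i$.
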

\begin{proof}
See Lemma 2.2 from \cite{Sil}.
\end{proof}

\begin{def-thm}[{\cite[Lemma 2.5.2]{vojta_lect}, \cite[Theorem 2.1 (d)(h)]{Sil}}] \label{weil}Let $k$ be a number field, and $M_k$ be the set of places on $k$.  Let $V$ be a projective variety over $k$ and let $Y\subset V$ be a closed subscheme of $V$.
We define the (local) Weil function for $Y$ with respect to $v\in M_k$ as
\begin{align}\label{WeilY}
\lambda_{Y, v}=\min_i \{\lambda_{D_i, v}\},
\end{align}
where $D_i$'s are as in the above lemma. This is independent
of the choices of the $D_i$'s up to an $M_k$-constant, and satisfies
$$
\lambda_{Y_1,v}(P) \le \lambda_{Y_2,v}(P)
$$
up to an $M_k$-constant whenever $Y_1\subseteq Y_2$.  Moreover, if $\pi:  \widetilde V \to V$ is the blowup of $V$ along $Y$ with the exceptional divisor $E$ (corresponding to the inverse image ideal sheaf of the ideal sheaf of $Y$),
$\lambda_{Y,v}(\pi(P)) = \lambda_{E, v}(P)$ up to an $M_k$-constant as functions on ${ \widetilde V}(k)\setminus E$.
\end{def-thm}

The height function for a closed subscheme $Y$ of $V$ is defined by
$$
h_Y(P):=\sum_{v\in M_k} \lambda_{Y,v}(P),
$$
for $P\in V(k)\setminus Y$.
We also define two functions related to the hight function for a closed subscheme $Y$ of $V$, depending on a finite set of places $S$ of $k$: the {\it proximity function}
$$
m_{Y,S}(P):=\sum_{v\in S} \lambda_{Y,v}(P)
$$
and the {\it counting function}
$$
N_{Y,S}(P):=\sum_{v\in M_k\setminus S} \lambda_{Y,v}(P)=h_Y(P)-m_{Y,S}(P)
$$
for $P\in V(k)\setminus Y$.
\begin{defi}
Let $k$ be a number field and $M_k$ be the set of places on $k$. Let $S\subset M_k$ be a finite subset containing
all archimedean places.  Let $X$ be a projective variety over $k$, and let $D$ be an effective divisor on $X$.  A set $R\subseteq X(k)\setminus \supp D$ is a $(D,S)$-integral set of points if there is a Weil function $\{\lambda_{D,v}\}$ for $D$ and an $M_k$-constant $\{\gamma_v\}$ such that for all $v\notin S$, $\lambda_{D,v}(P)\le \gamma_v$ for all $P\in R$.
\end{defi}
It follows directly from the definition that if $R\subseteq X(k)\setminus D$ is a $(D,S)$-integral set of points, then $N_{D,S}(P)\le O(1)$ for $P\in R$.

\subsection{Basic Propositions}
The following proposition is  an immediate consequence
of \cite[Theorem B.3.2.(f)]{HinSil}.
\begin{proposition}\label{compareheight}
Let $X$ be a projective variety defined over a number field $k$, and $A$ be an ample Cartier divisor on $X$ defined over $k$.
Let $D$ be a Cartier divisor $D$ defined over $k$ with $D\equiv A$.
Let $\epsilon>0$.
Then there exists a constant $c_{\epsilon}$ such that
$$
(1-\epsilon) h_A(P)-c_{\epsilon} \le h_D(P)\le (1+\epsilon) h_A(P)+c_{\epsilon}
$$ for   $P\in X(k)$.
\end{proposition}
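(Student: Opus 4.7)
The plan is a two-line reduction to the case of a numerically trivial divisor, followed by a direct appeal to the cited theorem. The proposition is advertised as ``immediate,'' so the role of the proof is really just to pinpoint which line of Hindry--Silverman's Theorem B.3.2 is doing the work.

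First I would set $E := D - A$. Since numerical equivalence of Cartier divisors respects the group structure on $\mathrm{Div}(X)$, the hypothesis $D \equiv A$ translates to $E \equiv 0$. Applying the additivity property of the height machine (Hindry--Silverman Theorem B.3.2(c), combined with invariance under linear equivalence) then yields
\[
h_D(P) = h_A(P) + h_E(P) + O(1)
\]
for all $P \in X(k)$, reducing the proposition to bounding $|h_E(P)|$ in terms of $h_A(P)$ when $E$ is numerically trivial and $A$ is ample.

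Next, I would invoke precisely part (f) of the cited Hindry--Silverman Theorem B.3.2, which asserts that for such $E$ and $A$ and for every $\epsilon > 0$ there is a constant $c'_\epsilon$ with
\[
|h_E(P)| \;\le\; \epsilon\, h_A(P) + c'_\epsilon
\]
for all $P \in X(k)$. Substituting this into the additivity identity above and absorbing the $O(1)$ term into $c_\epsilon$ produces the two-sided bound claimed in the proposition. The one mildly delicate point is the exact formulation of B.3.2(f) one has in hand: the version most frequently recorded gives only the weaker square-root estimate $|h_E(P)| = O\bigl(\sqrt{h_A(P)}\bigr) + O(1)$ for $E$ algebraically trivial, combined with the fact that $\mathrm{Num}(X)/\mathrm{Alg}(X)$ is torsion. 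In that case, the elementary inequality $\sqrt{t} \le \epsilon\, t + (4\epsilon)^{-1}$ applied to $t = h_A(P)+1$ upgrades the square-root bound to the linear one needed here.

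There is no substantive obstacle in the argument itself: the entire content is already packaged in B.3.2(f), and the proof consists of nothing beyond writing $D = A + (D-A)$, adding heights, and applying the theorem to the numerically trivial remainder. The ``hard'' input--that heights of numerically trivial divisors grow sublinearly relative to any ample height--is imported as a black box.
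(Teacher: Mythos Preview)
Your proof is correct and follows essentially the same route as the paper's: write $D = A + E$ with $E \equiv 0$, use additivity of heights, and invoke Hindry--Silverman B.3.2(f) on the numerically trivial piece. The paper phrases B.3.2(f) as the limit statement $\lim_{h_A(P)\to\infty} h_{A-D}(P)/h_A(P) = 0$, passes to the $\epsilon$-bound for $h_A(P)$ large, and then uses Northcott to mop up the finitely many points of small height; you instead write the bound directly and note how to upgrade the square-root formulation if that is the version on hand---but this is a purely cosmetic difference, and you correctly flag the one genuine subtlety (that $\mathrm{Num}/\mathrm{Alg}$ is torsion, needed to apply B.3.2(f) as stated for algebraically trivial divisors), which the paper handles by citing \cite[Remark~1.1.20]{Laza}.
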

\begin{proof}
Since $A$ is ample and a nonzero multiple of $A-D$ is algebraically equivalent to zero (see \cite[Remark 1.1.20]{Laza}), \cite[Theorem B.3.2.(f)]{HinSil} shows that
\[
\lim_{h_A(P) \to \infty} \frac{h_{A-D}(P)}{h_A(P)} = 0
\]
In particular, for each $\epsilon>0$, there exists $C_\epsilon$ such that when $h_A(P) \ge C_\epsilon$,
\[
-\epsilon h_{A}(P) < h_{A-D}(P) < \epsilon h_A(P),
\]
that is, $(1-\epsilon) h_A(P) < h_D(P) < (1+\epsilon) h_A(P)$.  By Northcott, there are only finitely many points $P\in X(k)$ with $h_A(P) < C_\epsilon$, we can adjust by a suitable constant $c_\epsilon$ to prove the assertion.
\end{proof}

\begin{proposition}
\label{divisorcontainY}
 Let $V$ be a  projective variety
  defined over a number field $k$ and $A$ be an ample divisor of $V$.   Let $P_1, \dots, P_{\ell} $ be  distinct points of $V(k)$.
 Let $Y$ be a closed subscheme of $V$ over $k$  which does not contain any  $P_i$, $1\le i\le \ell$.  Then there exists
 an
 effective divisor $D$  over $k$ of  $V$ linearly equivalent to a multiple of $A$ such that $Y$ is contained in
 $D$ (scheme-theoretically) and $P_i\notin D$ for any $1\le i\le \ell$.
\end{proposition}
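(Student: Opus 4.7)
The plan is to exhibit $D$ as the divisor of zeros of a carefully chosen global section of an appropriate twist of the ideal sheaf $\Ical_Y$ of $Y$. Let $\Ical_Y \subset \Ocal_V$ denote this ideal sheaf.

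First, since $A$ is ample, Serre's theorem guarantees that for all sufficiently large integers $m$, the coherent sheaf $\Ical_Y \otimes \Ocal_V(mA)$ is generated by its global sections. Fix such an $m$, and set $W := H^0(V, \Ical_Y \otimes \Ocal_V(mA))$, a finite-dimensional $k$-vector space which we regard as a subspace of $H^0(V, \Ocal_V(mA))$. Any $s \in W$ determines an effective divisor $\operatorname{div}(s)$ linearly equivalent to $mA$, and the factorization $\Ocal_V(-mA) \xrightarrow{s} \Ical_Y \hookrightarrow \Ocal_V$ shows that $\Ical_{\operatorname{div}(s)} \subseteq \Ical_Y$, i.e., $\operatorname{div}(s)$ contains $Y$ scheme-theoretically.

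Next I would handle the point-avoidance. For each $i=1,\dots,\ell$, let $W_i \subseteq W$ denote the $k$-linear subspace consisting of sections $s$ with $s(P_i)=0$. Because $P_i \notin \supp Y$, the stalk $(\Ical_Y)_{P_i}$ equals $\Ocal_{V,P_i}$, so $(\Ical_Y \otimes \Ocal_V(mA))_{P_i}$ is a free rank-one $\Ocal_{V,P_i}$-module. Global generation then forces the existence of some $s \in W$ with $s(P_i) \neq 0$, so each $W_i$ is a proper $k$-subspace of $W$.

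Finally, since $k$ is an infinite field, no finite-dimensional $k$-vector space is a union of finitely many proper subspaces; therefore $W \setminus \bigcup_{i=1}^{\ell} W_i$ is nonempty. Choosing any $s$ in this complement and taking $D := \operatorname{div}(s)$ yields an effective Cartier divisor defined over $k$, linearly equivalent to $mA$, containing $Y$ scheme-theoretically and avoiding each $P_i$. The only point requiring genuine input (rather than standard linear algebra or Serre vanishing) is the local-freeness step at the $P_i$, which relies crucially on the hypothesis $P_i \notin \supp Y$; otherwise all of $W$ would lie in $W_i$ and the argument would fail.
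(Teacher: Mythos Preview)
Your proof is correct and is essentially the same argument as the paper's, recast in sheaf-theoretic language. The paper chooses $N$ with $NA$ very ample, embeds $V\hookrightarrow\mathbb P^m$, picks same-degree homogeneous generators $F_1,\dots,F_r$ of the ideal of $Y$, and takes a generic $k$-linear combination $F=\sum a_tF_t$ avoiding the finitely many hyperplanes $\sum F_t(P_i)a_t=0$; this is exactly your ``choose $s\in W\setminus\bigcup W_i$'' step, with $W$ realized concretely as the span of the $F_t$ inside $H^0(V,\Ocal_V(N\deg F\cdot A))$ rather than abstractly via Serre's global generation.
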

\begin{proof}
Choose a positive integer $N$ such that  $NA$ is a very ample divisor 
of $V$.
Let $\phi:V\to \PP^m$ be the closed embedding corresponding to $NA$, i.e.
  $NA=\phi^{-1}(H) $ for some hyperplane $H$ in $\PP^m$.
 Let $I_Y $
 be the homogeneous ideals of $k[x_0,\hdots,x_m]$ defining $Y$
 as  closed subschemes of  $\PP_k^m$.
Let $F_t\in  I_Y$,
$1\le t\le r$,  be a set of homogeneous polynomials  of the same degree  that generate the ideal   $I_Y$.
Such generators exist, since any set of generators can be made the same degree by multiplying each generator with
suitable powers of $x_0, \ldots, x_m$.
Let
$$
F=a_1F_1+\dots +a_rF_r \in I_Y,
$$
where $a_1,\hdots,a_r\in k$ will be determined later.
Since $P_i\in V\setminus Y$, for $1\le i\le \ell$,
at least one of the $F_1(P_i),\dots,F_r(P_i)$ is not equal to 0.
Therefore, each of the equation
$$
 F_1(P_i)x_1+\dots + F_r(P_i)x_r=0,
$$
$1\le i\le \ell$,
determines a proper closed subvariety $H_i$ in $\PP^{r-1}$.
Choose $(a_1,\hdots,a_r)$
from $\PP^{r-1}(k)\setminus\cup_{i=1}^\ell H_i$.
Then the divisor $D=[F=0]\cap V$
contains $Y$  as a subscheme  and $\supp  D$ does not contain any $P_i$, $1\le i\le \ell$.
Moreover, the divisor $W:=[F=0]$ on $ \PP^m$ is linearly equivalent to $ (\deg F) \cdot  H$ and hence its pull-back $\phi^{-1}(W)=D$ is linearly equivalent to $ (\deg F\cdot N)A$.
\end{proof}

\section{Proof of Theorem \ref{heightEnumerical} and Theorem \ref{dim2}}\label{twomaintheorems}
Theorem \ref{heightEnumerical} and Theorem \ref{dim2} are  consequences of the following two theorems.

\begin{theorem}\label{proximityEnumerical}
 Let $V$ be a  projective variety   of dimension $n$  defined over a number field $k$, and let $S$ be a finite set of places of $k$.  Let $D_1, \dots, D_{n+1} $ be    effective  Cartier divisors of $V$   defined over $k$ in general position.   Suppose that there exist an ample Cartier divisor $A$ on $V$ and positive integers $d_i$ such that $D_i\equiv d_iA$ for all $1\le i\le n+1$.  Let $Y$ be a  closed subscheme of $V$  which does not contain any  point  of the set $\cup_{i=1}^{n+1}( \cap_{1\le j\ne i\le n+1}\supp D_j)$.  Let $\epsilon>0$.  Then there exists a proper Zariski closed set $Z$ such that for any set $R$ of $(\sum_{i=1}^{n+1} D_i,S)$-integral points on $V(k)$ we have
 \begin{align}
m_{Y,S}(P)\le   \epsilon h_A(P)+O(1)
\end{align}
for all $P\in R \setminus Z$.
\end{theorem}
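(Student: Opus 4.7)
The plan is to reduce the bound on the subscheme proximity function $m_{Y,S}$ to one on the proximity function of a single effective divisor $E$ containing $Y$, and then invoke Levin's generalization \cite[Theorem 3.2]{levin_duke} of the Evertse--Ferretti subspace theorem, applied to the augmented collection $\{E, D_1, \ldots, D_{n+1}\}$, every member of which is numerically equivalent to a positive multiple of the single ample divisor $A$.

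Observe first that $T := \bigcup_{i=1}^{n+1}\bigcap_{j\ne i}\supp D_j$ is a finite set of closed points, because general position of $D_1,\ldots,D_{n+1}$ on the $n$-dimensional variety $V$ forces each $\bigcap_{j\ne i}D_j$ (an intersection of $n$ Cartier divisors) to be zero-dimensional. The hypothesis on $Y$ is precisely that $Y$ contains no point of $T$. Applying Proposition \ref{divisorcontainY}, adapted by a standard Galois-averaging argument to handle the points of $T$ that are not necessarily $k$-rational, produces an effective divisor $E$ defined over $k$, linearly equivalent to $mA$ for some positive integer $m$, which contains $Y$ scheme-theoretically while its support avoids $T$. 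Since $Y\subseteq E$, Definition-Theorem \ref{weil} gives $\lambda_{Y,v}(P)\le \lambda_{E,v}(P)$ up to an $M_k$-constant, so $m_{Y,S}(P)\le m_{E,S}(P)+O(1)$, and it is enough to bound $m_{E,S}(P)$.

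The $n+2$ divisors $E,D_1,\ldots,D_{n+1}$ satisfy the general position hypothesis required by Levin's theorem: any $n+1$ of them have empty common intersection, since $D_1\cap\cdots\cap D_{n+1}=\emptyset$ by hypothesis and $E\cap\bigcap_{j\ne i}D_j\subseteq E\cap T=\emptyset$ by construction. Applying \cite[Theorem 3.2]{levin_duke} with numerical-equivalence coefficients $m,d_1,\ldots,d_{n+1}$ and a small parameter $\epsilon_1>0$ yields a proper Zariski closed set $Z\subset V$ such that outside $Z$,
\[
\frac{m_{E,S}(P)}{m}+\sum_{i=1}^{n+1}\frac{m_{D_i,S}(P)}{d_i}\le (n+1+\epsilon_1)\,h_A(P)+O(1).
\]
For $P\in R$ a $(\sum D_i,S)$-integral point, one has $m_{D_i,S}(P)=h_{D_i}(P)+O(1)$, and Proposition \ref{compareheight} combined with $D_i\equiv d_iA$ gives $h_{D_i}(P)\ge (1-\epsilon_2)d_ih_A(P)-O(1)$ for any $\epsilon_2>0$. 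Substituting and rearranging yields
\[
m_{E,S}(P)\le m\bigl(\epsilon_1+(n+1)\epsilon_2\bigr)h_A(P)+O(1);
\]
choosing $\epsilon_1,\epsilon_2$ small enough that the resulting coefficient lies below $\epsilon$ completes the proof.

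The main obstacle is engineering the auxiliary divisor $E$ so that $\{E,D_1,\ldots,D_{n+1}\}$ satisfies the general position hypothesis of Levin's theorem; but this reduces exactly to arranging that $E$ misses the finite locus $T$, which is precisely what the hypothesis on $Y$ and Proposition \ref{divisorcontainY} together allow. A secondary technical wrinkle is the descent from a Galois-stable set of closed points to a $k$-rational construction within Proposition \ref{divisorcontainY}, which requires only a mild enhancement of the proof given there by observing that the union of the proper closed subvarieties $H_i\subset\PP^{r-1}$ cut out by Galois-conjugate points is Galois-stable, so its complement contains a $k$-rational point.
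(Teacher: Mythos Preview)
Your approach is essentially the same as the paper's: use Proposition~\ref{divisorcontainY} to replace $Y$ by an effective divisor $E\sim mA$ that avoids the finite set $T$, then combine Levin's theorem (Theorem~\ref{LevinDuke}) with the integrality of $P$ and Proposition~\ref{compareheight} to squeeze out the bound on $m_{E,S}$. The paper packages the second step as Corollary~\ref{proximityEnumericalDivisor}.

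Two technical points are handled more carefully in the paper than in your sketch. First, ``general position'' for $E,D_1,\ldots,D_{n+1}$ is not merely the statement that any $n+1$ of them have empty intersection; one must also check that every subcollection of size $k\le n$ meets in dimension $\le n-k$. For subsets not involving $E$ this is the hypothesis on the $D_i$, but for subsets containing $E$ the paper supplies an argument using ampleness (if some $k$-fold intersection were too large, intersecting further with the remaining ample $D_j$ would produce a nonempty set that should be empty). Second, Theorem~\ref{LevinDuke} as stated in the paper takes exactly $n+1$ divisors per place, not $n+2$; the paper bridges this via Lemma~\ref{subgeneralweil}, which shows that for divisors in general position the sum of all $n+2$ local heights is, up to an $M_k$-constant, already achieved by the largest $n$ of them. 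You either need to invoke that lemma or cite a version of Levin's theorem that allows more than $n+1$ divisors. Both points are routine to fill in, and your identification of the Galois-descent wrinkle in Proposition~\ref{divisorcontainY} is apt (the paper does not comment on it).
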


\begin{theorem}\label{Key}
Under the assumptions of Theorem \ref{proximityEnumerical}, let us further assume that $V$ is Cohen--Macaulay  and that the support of $Y$ is not contained in any support of   $D_i$, $1\le i\le n+1$.
Then for any $\epsilon >0$, there exists a proper Zariski closed set $Z$ such that for any set $R$ of $(\sum_{i=1}^{n+1} D_i,S)$-integral points on $V(k)$ we have
 \begin{align}\label{Keyeq}
h_Y(P)\le   \epsilon h_A(P)+O(1)
\end{align}
for all $P\in R \setminus Z$.
\end{theorem}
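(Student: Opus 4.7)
The plan is to follow the roadmap sketched in the introduction: pull $Y$ back to a divisor on the blowup of $V$ along $Y$, and apply the Ru--Vojta theorem in place of Vojta's conjecture. Let $\pi : \widetilde V \to V$ be the blowup along $Y$ with exceptional divisor $E$. By Definition-Theorem \ref{weil}, $h_Y(P) = h_E(Q) + O(1)$ for $Q = \pi^{-1}(P)$ (well defined since $P \not\in Y$), so bounding $h_E(Q)$ suffices. Since $Y$ is not contained in any $\supp D_i$, each pullback $\pi^* D_i$ is effective with no $E$-component, still lies in the numerical class $d_i \pi^* A$, and the $\pi^* D_i$ remain in general position on $\widetilde V \setminus E$. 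For a large integer $\ell$ (to be fixed later), the $\mathbb Q$-divisor $L := \pi^*A - \tfrac{1}{\ell} E$ will be big (and ample for $\ell$ large enough): Proposition \ref{divisorcontainY} furnishes an effective $\Phi \in |NA|$ containing $Y$, whose pullback has nontrivial $E$-multiplicity, making $\pi^*(NA) - E$ effective and $L$ a small positive perturbation of $\pi^*A$.

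Next I apply the general Ru--Vojta theorem on $\widetilde V$ to the effective divisors $\pi^* D_1, \ldots, \pi^* D_{n+1}$, weighted by $1/d_i$ so that each is normalized to the numerical class $\pi^* A$. Analogously to the introductory calculation for $\mathbb P^n$, this should yield an inequality of the form
$$\sum_{i=1}^{n+1} \tfrac{1}{d_i}\, m_{\pi^* D_i, S}(Q) + \tfrac{1}{\ell}\, h_E(Q) \leq \Bigl(n+1 + O\bigl(\tfrac{1}{\ell \sqrt{\ell}}\bigr)\Bigr)\, h_{\pi^* A}(Q) + O(1),$$
valid for all $Q \in \widetilde V(k)$ outside a proper Zariski-closed subset $\widetilde Z \subsetneq \widetilde V$. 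The constant ``$n+1$'' on the right comes from the general-position hypothesis, which ensures $\bigcap_{i\in I} \supp \pi^* D_i = \emptyset$ for any $(n+1)$-subset $I$, and the $\tfrac{1}{\ell} h_E$ term on the left reflects that we are applying Ru--Vojta against the ample perturbation $L$ rather than against $\pi^* A$.

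Now take $P \in R$ a $(\sum D_i, S)$-integral point and set $Q = \pi^{-1}(P)$. Integrality gives $m_{D_i, S}(P) = h_{D_i}(P) + O(1)$, and Proposition \ref{compareheight} applied to $D_i \equiv d_i A$ yields $h_{D_i}(P) = d_i h_A(P) + o(h_A(P))$, so
$$\sum_{i=1}^{n+1} \tfrac{1}{d_i}\, m_{\pi^* D_i, S}(Q) = (n+1)\, h_{\pi^* A}(Q) + o(h_{\pi^* A}(Q)).$$
Substituting into the Ru--Vojta inequality cancels the leading $(n+1) h_{\pi^* A}$ term and leaves $h_E(Q) \leq O(1/\sqrt\ell)\, h_A(P) + o(h_A(P)) + O(1)$. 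Choosing $\ell$ large enough that $O(1/\sqrt\ell) < \epsilon/2$ and absorbing the $o(h_A)$ term via a further application of Proposition \ref{compareheight}, one concludes $h_Y(P) = h_E(Q) + O(1) \leq \epsilon h_A(P) + O(1)$ for $P \in R \setminus Z$, where $Z := \pi(\widetilde Z)$ is a proper closed subset of $V$ because $\pi$ is birational and $\widetilde Z \neq \widetilde V$.

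The main obstacle is the Ru--Vojta step itself: producing the displayed inequality with the correct constant $n+1 + O(1/(\ell\sqrt\ell))$ on the right and coefficient $1/\ell$ on $h_E$ on the left. This comes down to a Nevanlinna (or $\beta$-) constant computation through a filtration on $H^0(\widetilde V, mL)$ organized by vanishing orders along the $\pi^* D_i$: the general-position hypothesis ensures the expected Schmidt-style dimension counts, and the Cohen--Macaulay hypothesis on $V$ guarantees that intersections of the $\pi^* D_i$ have the predicted codimension, so the graded pieces of the filtration are counted correctly. One must also verify that the exceptional closed subset produced by Ru--Vojta is genuinely proper in $\widetilde V$, which reduces to bigness of $L$.
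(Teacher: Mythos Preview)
Your roadmap matches the paper's approach: blow up along $Y$, apply Ru--Vojta on $\widetilde V$ with the ample line bundle $\mathcal L = \mathcal O(\ell\pi^*D - E)$, and conclude from integrality. However, two genuine gaps prevent the argument from going through as written, and both concern the hypotheses needed to invoke Theorem~\ref{Ru-Vojta}.

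First, you never arrange for $\widetilde V$ to be Cohen--Macaulay, yet this is what converts ``general position'' into ``intersect properly'' (the actual hypothesis of Ru--Vojta; see Remark~\ref{gpCM}). The blowup of a Cohen--Macaulay variety along an arbitrary closed subscheme need not be Cohen--Macaulay; Proposition~\ref{CohenMacaulay} requires $Y$ to be a local complete intersection. The paper handles this by first enlarging $Y$ to an lci subscheme $Y' = H_1\cap H_2$ of codimension exactly~$2$, constructed via Proposition~\ref{divisorcontainY} so that $\supp Y'$ still avoids $\cup_i(\cap_{j\ne i}\supp D_j)$ and is not contained in any $\supp D_i$. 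Second, you only assert general position of the $\pi^*D_i$ on $\widetilde V\setminus E$, but proper intersection is required on all of $\widetilde V$. Inside $E$ the strict transforms $\widetilde D_i$ can meet in excess dimension unless one controls how $Y$ sits relative to the $D_i$; this is exactly where the hypothesis that $Y$ contains no point of $\cup_i(\cap_{j\ne i}\supp D_j)$ enters --- a hypothesis you never invoke. The paper devotes a separate Claim to showing $\widetilde D_1,\ldots,\widetilde D_{n+1}$ are in general position on $\widetilde V$: it uses that hypothesis together with ampleness of the $D_i$ to rule out any component of $\cap_{i\in I}D_i$ lying inside $Y$, and then the dimension formula $\dim\pi^{-1}(Z) = \dim Z + \codim Y - 1$ from Proposition~\ref{CohenMacaulay} to bound dimensions over $E$. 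The $\gamma$-constant bound you flag as ``the main obstacle'' is indeed a substantial computation (Lemma~\ref{countinglambda}, an asymptotic Riemann--Roch estimate), but it is a separate issue and is not what the Cohen--Macaulay hypothesis is for.
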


\begin{proof}[Proof of Theorem  \ref{heightEnumerical}]
First, suppose that the support of $Y$ is contained in $\supp  D_i$ for some $1\le i\le n+1$.  Then $Y$ with the reduced subscheme structure is contained in $D_i$ scheme-theoretically.  Moreover, since some suitable power of the ideal of the reduced structure is contained in the ideal of $Y$, it follows from \cite[Theorem 2.1(c)]{Sil} that $h_Y$ is less than or equal to a multiple of the height with respect to $Y$ with the reduced structure.  Therefore, we may assume that $Y$ has the reduced structure and $Y\subset D_i$ scheme-theoretically.
Consequently,
the definition of the local Weil function implies that
$$
N_{Y,S}(P) \le N_{D_i,S}(P)+O(1)\le O(1),
$$
for all $P\in R$.
On the other hand, we may apply Theorem \ref{proximityEnumerical} to find  a proper Zariski closed set $Z$ such that for any set $R$ of $(\sum_{i=1}^{n+1} D_i,S)$-integral points on $V(k)$ we have
 \begin{align}
m_{Y,S}(P)\le   \epsilon h_A(P)+O(1)
\end{align}
for all $P\in R \setminus Z$.
Then the assertion is satisfied by combining the above two inequalities.

Suppose next that the support of $Y$ is not contained in any $\supp  D_i$ for $1\le i\le n+1$. This case is covered by Theorem \ref{Key}.
\end{proof}
\begin{proof}[Proof of Theorem  \ref{dim2}]
Since $Y$ is a set of finitely many points,   it suffices to show for $Y$ being a point $Q$ with a reduced subscheme structure.
If $Q\in D_i$ for some $i$, then
$$
N_{Q,S}(P) \le N_{D_i,S}(P)+O(1)\le O(1).
$$
If $Q\notin  D_i$  for each $1\le i\le n+1$, then   $Q$ is not a point in  $\cup_{i=1}^{n+1}( \cap_{1\le j\ne i\le n+1}\supp D_j)$.  Then  by Theorem \ref{Key}, there exists a proper Zariski closed set $Z_Q$ such that for any set $R$ of $(\sum_{i=1}^{n+1} D_i,S)$-integral points on $V(k)$ we have
 \begin{align}
N_{Q,S}(P) \le h_{Q}(P)\le   \epsilon h_A(P)+O(1)
\end{align}
for all $P\in R \setminus Z_Q$.
\end{proof}

\begin{proof}[Proof of Corollary \ref{CorvajaZannier}]
By replacing $f$ by $f^{\deg g}$ and $g$ by $g^{\deg f}$, we may assume that $\deg f=\deg g=d$.
Let $F(X,Y,Z)=Z^{d}f(\frac XZ,\frac YZ)$ and $G(X,Y,Z)=Z^{d}f(\frac XZ,\frac YZ)$ be the homogenization of $f(X,Y)$.  Let $\mathcal F$ and $\mathcal G$ be the projective curves defined by $F$ and $G$ respectively.  Recall the following (standard) Weil functions for $v\in M_k$:
$$
\lambda_{\mathcal F,v}(x,y,z)=-\log\frac{|F(x,y,z)|_v}{ \|f\|_v\cdot \max\{|x|^d_v,|y|^d_v,|z|^d_v\}}\ge 0,
$$
where $ \|f\|_v$ is the maximum of the $v$-adic absolute value of the  coefficients of $f$.
In particular, for $v\in M_k\setminus S$ and $x,y\in  \mathcal O_S^{\times}$, we have
$$
\lambda_{\mathcal F,v}(x,y,1)=-\log |F(x,y,1)|_v +  \log \|f\|_v = -\log |f(x,y)|_v+  \log \|f\|_v,
$$
and hence
\begin{align}
-\log^- |f(x,y)|_v\le \lambda_{\mathcal F,v}(x,y,1)-\log^- \|f\|_v
\end{align}
for $v\in M_k\setminus S$ and $x,y\in  \mathcal O_S^{\times}$.
We note that the corresponding equations hold for $g$ respectively.
Let $D_1:=\{X=0\}, D_{2}:=\{Y=0\}$, and $D_{3}:=\{Z=0\}$ be the divisors given by the coordinate hyperplanes in $\mathbb P^2$.
Then Theorem \ref{dim2} for the set $W$ of points defined as the   scheme-theoretic intersection of $\mathcal F$ and $\mathcal G$ implies that there exists a proper Zariski closed set $Z$ of $\mathbb P^2(k)$ such that
\begin{align*}
\sum_{v\in M_k\setminus S}&-\log^-\max\{ |f(x,y)|_v, |g(x,y)|_v\} \\
&\le  \sum_{v\in M_k\setminus S}\min\{\lambda_{\mathcal F,v}(x,y,1), \lambda_{\mathcal G,v}(x,y,1)\}
+O(1)\\
& = \sum_{v\in M_k\setminus S} \lambda_W(x,y,1) + O(1) \qquad \text{($\because$ Definition-Theorem \ref{weil})} \\
&\le \epsilon \max\{h(x), h(y)\}+O(1)
\end{align*}
for all $(x,y)\in \mathbb G_m^2(\mathcal O_S)\setminus Z$.
We note that the constant $O(1)$ can be eliminated by adding a finite number of points in $Z$.
Finally, since $x$ and $y$ are $S$-units, we can conclude the proof by the standard reduction argument that  the Zariski closure of a subset of $\mathbb G_m^2(\mathcal O_S)$ is a finite union of translates of algebraic subgroups (of dimension 1) by Laurent \cite{laurent}.
\end{proof}

 \section{Proof of Theorem \ref{Key}}\label{countingsec}

 \subsection{Theorem of Ru--Vojta and some basic propositions}
We first recall the following definitions and geometric properties from \cite{ruvojta}.
\begin{defi}
Let
$\mathcal L$ be a big line sheaf  and let $D$ be a nonzero effective Cartier divisor on a projective variety $X$.  We define
$$
\gamma_{\mathcal L,D } :=\limsup_{N\to\infty}\frac{N\cdot h^0(V,\mathcal L^N)}{\sum_{m=1}^{\infty} h^0 (V ,  \mathcal L^N(-m D)  ) }.
$$
\end{defi}

\begin{defi}
Let $D_1,\hdots,D_q$  be effective Cartier divisors on a variety $X$ of dimension $n$.
\begin{enumerate}
\item[{\rm (i)}] We say that $D_1,\hdots,D_q$ are {\it in general position} if for any $I\subset \{1,\dots,q\}$, we have $\dim (\cap_{i\in I} \supp  D_i) \le n-\#I $, with the convention $\dim \emptyset = -\infty$.
\item[{\rm (ii)}] We say that $D_1,\hdots,D_q$ are {\it intersect properly} if for any $I\subset \{1,\dots,q\}$ and $x\in  \cap_{i\in I} \supp  D_i$, the sequence $(\phi_i)_{i\in I}$ is a regular sequence in the local ring $\mathcal O_{X,x},$ where $\phi_i$ are the local defining equations of $D_i$, $1\le i\le q$.
\end{enumerate}
\end{defi}
\begin{remark}\label{gpCM}
If $D_1,\cdots,D_q$ intersect properly, then they are in general position.
By \cite[Theorem 17.4]{Matsumura}, the converse holds if $X$ is Cohen-Macaulay.
\end{remark}

We will use the following result to prove Lemma  \ref{Key}.

\begin{theorem} \cite[General Theorem (Arithmetic Part)]{ruvojta}\label{Ru-Vojta} Let $k$ be a number field and $M_k$ be the set of places on $k$. Let $S\subset M_k$ be a finite subset. Let $X$ be a projective variety defined over $k$.
 Let $D_1,\hdots,D_q$  be effective Cartier divisors intersecting properly on $X$.
 Let  $\mathcal L$ be a big line sheaf on $X$.  Then for any $\e>0$
\begin{align}
 \sum_{i=1}^q m_{D_i,S}(x)\le (\max_{1\le i\le q} \{\gamma_{\mathcal L, D_i} \}+\e) h_{\mathcal L}(x)
\end{align}
holds for  all $x$ outside a proper Zariski-closed subset of $X(k)$.
\end{theorem}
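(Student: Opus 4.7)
I would approach Theorem \ref{Ru-Vojta} by reducing it to the projective-subvariety form of Schmidt's subspace theorem (due to Evertse--Ferretti) via an embedding $\phi_N:X\dashrightarrow\PP^{M_N-1}$, where $M_N:=h^0(X,\mathcal L^N)$ for a large integer $N$, defined by a carefully chosen basis of sections of $\mathcal L^N$. This adapts the Corvaja--Zannier / Evertse--Ferretti scheme (originally developed for hypersurfaces in projective space) to an arbitrary projective variety equipped with an arbitrary big line sheaf; the invariants $\gamma_{\mathcal L,D_i}$ will emerge naturally from the asymptotic vanishing behavior of global sections of $\mathcal L^N$ along $D_i$ as $N\to\infty$.

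\textbf{Simultaneous adapted basis.} For each $i\in\{1,\dots,q\}$ consider the decreasing filtration of $H^0(X,\mathcal L^N)$ by $H^0(X,\mathcal L^N(-mD_i))$, $m=0,1,2,\dots$. For any basis $\{s_j\}$ of $H^0(X,\mathcal L^N)$ compatible with the $D_i$-filtration alone, a telescoping identity gives $\sum_j \mathrm{ord}_{D_i}s_j=\sum_{m\ge 1}h^0(X,\mathcal L^N(-mD_i))$. The crucial input is the construction of a \emph{single} basis of $H^0(X,\mathcal L^N)$ that is (approximately) compatible with all $q$ filtrations simultaneously, so that this identity holds, up to a negligible error, for every $i$ at once. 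This is made possible by the hypothesis that $D_1,\dots,D_q$ intersect properly: their local defining functions form regular sequences in the local rings at the intersection points, so that the Hilbert functions of the iterated quotients $H^0(X,\mathcal L^N(-m_1 D_{i_1}-\cdots-m_r D_{i_r}))/H^0(X,\mathcal L^N(-m_1 D_{i_1}-\cdots-(m_r+1)D_{i_r}))$ behave as Koszul regularity predicts, and a Gram--Schmidt-type refinement yields a basis whose total vanishing order along each $D_i$ equals $\sum_{m\ge 1}h^0(X,\mathcal L^N(-mD_i))$ up to an $o(M_N)$ error.

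\textbf{Applying Schmidt and taking $N\to\infty$.} The chosen basis defines a morphism $\phi_N:X\dashrightarrow\PP^{M_N-1}$. Apply Evertse--Ferretti's version of Schmidt's subspace theorem to $\overline{\phi_N(X)}$ together with the coordinate hyperplanes $H_j=\{s_j=0\}$: for $\phi_N(x)$ outside a proper Zariski-closed subset of $\overline{\phi_N(X)}$,
\[
\sum_{v\in S}\sum_{j=1}^{M_N}\lambda_{H_j,v}(\phi_N(x))\le (M_N+\e_0)\,h_{\mathcal O(1)}(\phi_N(x))+O(1).
\]
Using the pull-back identity $\lambda_{H_j,v}(\phi_N(x))=\sum_i(\mathrm{ord}_{D_i}s_j)\,\lambda_{D_i,v}(x)+O(1)$ and the adapted basis above, the left side equals $\sum_i\bigl(\sum_{m\ge 1}h^0(X,\mathcal L^N(-mD_i))\bigr) m_{D_i,S}(x)+o(M_N N)$. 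Combining with $h_{\mathcal O(1)}(\phi_N(x))=N h_{\mathcal L}(x)+O(1)$, dividing by $M_N N$, and letting $N\to\infty$, the $\limsup$ in the definition of $\gamma_{\mathcal L,D_i}$ yields $\sum_i m_{D_i,S}(x)/\gamma_{\mathcal L,D_i}\le h_{\mathcal L}(x)+\e/2$; replacing $1/\gamma_{\mathcal L,D_i}$ by $1/\max_i\gamma_{\mathcal L,D_i}$ on the left produces the desired $\sum_i m_{D_i,S}(x)\le(\max_i\gamma_{\mathcal L,D_i}+\e) h_{\mathcal L}(x)$ outside a proper Zariski-closed set.

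\textbf{Main obstacle.} The heart of the argument is the \emph{simultaneous} adapted basis: engineering a single basis of $H^0(X,\mathcal L^N)$ whose vanishing orders attain the maximal totals along all $q$ divisors at once. Proper intersection of the $D_i$ is the geometric input that makes this feasible (via Koszul regularity of the local defining sequences), but the explicit construction is a delicate multi-index Hilbert-function analysis that occupies the bulk of the Ru--Vojta paper. A secondary issue is that $\mathcal L$ is only assumed big (not ample): $M_N$ need not be strictly polynomial in $N$, and the precise asymptotic comparison of $M_N$ with $\sum_{m\ge 1}h^0(X,\mathcal L^N(-mD_i))$ requires either Fujita approximation (to reduce to the ample case) or a direct handling of the $\limsup$ defining $\gamma_{\mathcal L,D_i}$.
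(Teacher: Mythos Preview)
The paper does \emph{not} prove Theorem~\ref{Ru-Vojta}; it is quoted verbatim from \cite{ruvojta} and used as a black box (see the citation ``\cite[General Theorem (Arithmetic Part)]{ruvojta}'' in the theorem heading and the discussion in Section~\ref{countingsec}). Consequently there is no proof in this paper to compare your proposal against.

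That said, your sketch is a reasonable high-level outline of the Ru--Vojta strategy, but one point is inaccurate and would cause the argument to fail as written. You claim one needs a \emph{single} basis of $H^0(X,\mathcal L^N)$ simultaneously adapted to the filtrations of \emph{all} $q$ divisors $D_1,\dots,D_q$. This is generally impossible when $q>\dim X$, and is not what Ru--Vojta do. The key observation (which is where ``intersecting properly'' enters) is that at any given point of $X$ at most $n=\dim X$ of the $D_i$ can pass, so for each $v\in S$ only the $n$ largest local heights $\lambda_{D_i,v}(x)$ contribute nontrivially (cf.\ Lemma~\ref{subgeneralweil} in this paper). One therefore needs, for each subset $J\subset\{1,\dots,q\}$ with $|J|\le n$, a basis adapted to the $D_j$ with $j\in J$; the proper-intersection hypothesis guarantees the regular-sequence condition that makes such a \emph{bounded-size} simultaneous adaptation possible. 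The application of Schmidt then runs over the finite collection of all these bases (for all such $J$), not a single universal one. Your identity $\lambda_{H_j,v}(\phi_N(x))=\sum_i(\mathrm{ord}_{D_i}s_j)\lambda_{D_i,v}(x)+O(1)$ is also too strong as stated; only an inequality holds in general, and again only for the relevant subset $J$ at the point $x$.
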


The following proposition is well-known in the case when $X$ and $Y$ are nonsingular (and presumably even in this setting to the experts):

\begin{proposition}\label{CohenMacaulay}
Let $X$ be a Cohen-Macaulay scheme over $k$ and $Y\subset X$ be a locally complete intersection subscheme.  Let  $\pi: \widetilde X \longrightarrow X$ be the blowup of $X$ along $Y$.  Then $ \widetilde X$ is a Cohen-Macaulay scheme.  Moreover, if $Z$ is an irreducible subscheme of $Y$,
$$
\dim \pi^{-1}(Z)=\dim Z+ \codim Y-1.
$$
\end{proposition}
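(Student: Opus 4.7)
My plan is to reduce both assertions to a single local model on $X$. Since being Cohen--Macaulay and the dimensions of fibers are local on the target, I may work on an affine open $\mathrm{Spec}\, A \subseteq X$; and since $Y$ is locally a complete intersection, I may assume $Y\cap\mathrm{Spec}\,A = V(f_1,\dots,f_r)$ with $(f_1,\dots,f_r)$ a regular sequence and $r = \codim Y$. In this setting the blowup $\widetilde X$ is the closed subscheme of $\mathrm{Spec}\, A \times_k \PP^{r-1}_k$ cut out by the $2 \times 2$ minors of $\bigl(\begin{smallmatrix} f_1 & \cdots & f_r \\ T_1 & \cdots & T_r\end{smallmatrix}\bigr)$, and on the standard chart $\{T_i \ne 0\}$ one has
$$
U_i \;=\; \mathrm{Spec}\, A[t_j : j \ne i]\big/ \bigl(f_j - f_i t_j : j \ne i\bigr).
$$

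For the Cohen--Macaulay part I would show that $(f_j - f_i t_j)_{j \ne i}$ is a regular sequence of length $r-1$ in the Cohen--Macaulay polynomial ring $B := A[t_j : j\ne i]$. Granted this, each $U_i$ is a complete intersection in the Cohen--Macaulay scheme $\mathrm{Spec}\, B$, hence itself Cohen--Macaulay, and so is $\widetilde X$. For the dimension formula, the local complete intersection hypothesis makes the conormal sheaf $\mathcal I_Y/\mathcal I_Y^2$ a locally free $\mathcal O_Y$-module of rank $r$, and one reads off from the local presentation above that the exceptional divisor $E = \pi^{-1}(Y)$ is naturally the projective bundle $\PP_Y(\mathcal I_Y/\mathcal I_Y^2)$, Zariski-locally a $\PP^{r-1}$-bundle over $Y$. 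For an irreducible closed subscheme $Z \subseteq Y$, the scheme-theoretic preimage $\pi^{-1}(Z) = E \times_Y Z$ is therefore a $\PP^{r-1}$-bundle over $Z$, whence
$$
\dim \pi^{-1}(Z) = \dim Z + (r-1) = \dim Z + \codim Y - 1.
$$

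The main obstacle is the verification that $(f_j - f_i t_j)_{j \ne i}$ is a regular sequence in $B$. I expect to prove this by induction on the number of elements killed: each $f_j - f_i t_j$ is linear in $t_j$ with leading coefficient $-f_i$, and the class of $f_i$ remains a non-zero-divisor after quotienting by the previously killed $f_{j'} - f_i t_{j'}$, because $(f_1,\dots,f_r)$ is a regular sequence in $A$ and hence remains regular after tensoring with the flat $A$-algebra $B$, and the successive quotients can be tracked concretely on representatives. The remaining pieces (a complete intersection in a Cohen--Macaulay ring is Cohen--Macaulay; the exceptional divisor of a blowup along a regular sequence is the expected projective bundle) are classical and will need only routine verification in the local model above.
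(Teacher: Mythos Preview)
Your proposal is correct. For the dimension formula your argument is identical to the paper's: both identify $E=\pi^{-1}(Y)$ with the projective bundle $\PP_Y(\mathcal I_Y/\mathcal I_Y^2)$ (locally free of rank $r=\codim Y$) and read off that $\pi^{-1}(Z)$ is a $\PP^{r-1}$-bundle over $Z$.

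For Cohen--Macaulayness the paper does not argue at all: it simply cites \cite[Proposition~5.5(i)]{Kovacs}. Your route---showing each affine chart $U_i$ is a complete intersection in the Cohen--Macaulay polynomial ring $B=A[t_j:j\ne i]$---is the standard self-contained argument and is sound. One remark: the inductive verification that $f_i$ remains a non-zero-divisor modulo the previously killed $g_{j'}=f_{j'}-f_i t_{j'}$ is fussier than your sketch indicates. Since you already know $B$ is Cohen--Macaulay, it is cleaner to bypass the induction and invoke the height criterion directly: one has $\mathrm{ht}(g_j:j\ne i)\le r-1$ by Krull, and equality follows because $V(g_j:j\ne i)\cap D(f_i)\cong D(f_i)\subseteq\mathrm{Spec}\,A$ while $V(g_j:j\ne i)\cap V(f_i)=V(f_1,\dots,f_r)\times\mathbb A^{r-1}$, both of dimension at most $\dim A=\dim B-(r-1)$. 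Your approach buys a self-contained proof where the paper relies on an external reference; the paper's buys brevity.
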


\begin{proof}
Cohen-Macaulayness follows from \cite[Proposition 5.5(i)]{Kovacs}. For the second part, let $\mathscr I$ be the ideal sheaf corresponding to $Y$.  Then \cite[Proposition 5.5(ii)]{Kovacs} shows that $\mathscr I/\mathscr I^2$ is locally free, and is of rank $\codim Y$.  Since
\cite[Proposition 5.5(iii)]{Kovacs} shows that over $Y$, $\pi$ is isomorphic to the projective space bundle $\mathbb P(\mathscr I/\mathscr I^2) \to Y$, the conclusion follows.
\end{proof}

We will use the  the following  asymptotic Riemann-Roch theorem for nef divisors (see \cite[Corollary 1.4.41]{Laza}).
\begin{theorem}\label{RR}
Let $X$ be a projective variety of dimension $n$ and $D$ be a nef divisor on $X$.  Then
$$
h^0(X,\mathcal O(mD))=\frac{D^n}{n!}\cdot m^n+O(m^{n-1}).
$$
\end{theorem}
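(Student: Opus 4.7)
The plan is to combine two inputs: an asymptotic formula for the Euler characteristic $\chi(X, \mathcal O(mD))$ that holds for any Cartier divisor, and an $O(m^{n-1})$ bound on the higher cohomology groups $h^i(X, \mathcal O(mD))$ for $i \geq 1$ that exploits the nefness of $D$. For the Euler characteristic, Snapper's theorem guarantees that $\chi(X, \mathcal O(mD))$ is a numerical polynomial in $m$ of degree at most $n = \dim X$ with leading coefficient $D^n/n!$, yielding
$$\chi(X, \mathcal O(mD)) = \frac{D^n}{n!}\,m^n + O(m^{n-1}).$$
The identity $h^0 = \chi + \sum_{i \geq 1}(-1)^{i+1} h^i$ then reduces the theorem to establishing the higher-cohomology bound.

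That bound will be proved by induction on $n = \dim X$, the case $n = 0$ being trivial since higher cohomology on a zero-dimensional scheme vanishes. For the inductive step, I fix a very ample divisor $H$ on $X$, large enough so that $H - D$ is ample, and take $Y \in |H|$ to be a smooth irreducible member (via Bertini, passing to a smooth model if needed, which does not affect asymptotic counts). The structure sequence for $Y$ twisted by $\mathcal O(mD)$ reads
$$0 \to \mathcal O_X(mD - H) \to \mathcal O_X(mD) \to \mathcal O_Y(mD|_Y) \to 0.$$
Since the restriction of a nef divisor to a subvariety remains nef, $D|_Y$ is nef on the $(n-1)$-dimensional variety $Y$, and the inductive hypothesis gives $h^i(Y, \mathcal O_Y(mD|_Y)) = O(m^{n-2})$ for $i \geq 1$. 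The long exact sequence therefore bounds $h^i(X, \mathcal O(mD))$ in terms of $h^i(X, \mathcal O(mD - H))$ plus an $O(m^{n-2})$ error. Iterating this short exact sequence $\ell$ times, with $\ell = Cm$ for $C$ large enough that $CH - D$ is ample, Serre duality combined with Serre vanishing forces $h^i(X, \mathcal O(m(D - CH))) = 0$ for $1 \leq i \leq n-1$ and $m$ sufficiently large, leaving a sum of $\ell$ cohomology groups on $Y$.

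The principal obstacle is to bound the resulting summation $\sum_{k=0}^{\ell-1} h^i(Y, \mathcal O_Y((mD - kH)|_Y))$ uniformly by $O(m^{n-1})$, since the divisor $mD - kH$ is not nef for $k > 0$ and the inductive hypothesis does not apply directly. I would resolve this by applying asymptotic Riemann--Roch on $Y$ to majorize each summand by the Hilbert polynomial of a suitable very ample divisor on $Y$ (a polynomial of degree $n-1$ in the twisting parameter), or equivalently by dévissage on the coherent sheaves $\mathcal O_Y((mD - kH)|_Y)$; the outcome is an $O(m^{n-2})$ bound on each term and hence $O(m^{n-1})$ on the total. The remaining case $i = n$ is handled separately by Serre duality and a direct vanishing argument for $h^0$ of the antinef twist $\omega_X(-mD)$, using that a line bundle of strictly negative degree on some curve admits no nontrivial sections for $m \gg 0$ (the numerically trivial subcase being immediate). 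Combining the Snapper estimate for $\chi$ with the cohomological bound yields the claimed expansion of $h^0(X, \mathcal O(mD))$.
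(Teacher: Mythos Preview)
The paper does not prove this statement; it is quoted from \cite[Corollary 1.4.41]{Laza}. Your overall strategy---Snapper's theorem for $\chi$ plus an $O(m^{n-1})$ bound on $h^i$ for $i\ge 1$---is the standard one, but your inductive proof of the higher-cohomology bound has a genuine gap.

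The problem is the direction of iteration. Subtracting $H$, you need $\ell\sim Cm$ steps before $mD-\ell H$ is anti-ample enough for Serre vanishing (via duality) to apply, and this leaves a sum $\sum_{k=0}^{Cm-1} h^i\bigl(Y,\mathcal O_Y((mD-kH)|_Y)\bigr)$ with order-$m$ many terms. You assert each term is $O(m^{n-2})$, but this is false: for $k>0$ the restricted divisor is not nef, the inductive hypothesis does not apply, and the only universal bound on an $(n{-}1)$-dimensional variety is $O(m^{n-1})$. Concretely, take $D$ numerically trivial; then $h^{n-1}(Y,-kH|_Y)$ has order $k^{n-1}\sim m^{n-1}$. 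Your own parenthetical, ``a polynomial of degree $n-1$ in the twisting parameter'', already contradicts the claimed $O(m^{n-2})$ outcome. Summing $Cm$ terms of size $O(m^{n-1})$ gives only $O(m^n)$, which is useless. The standard argument (Lazarsfeld, \cite[Theorem 1.4.40]{Laza}) iterates in the \emph{opposite} direction, adding $H$: Fujita's vanishing theorem supplies a single integer $m_0$, independent of $m$, with $H^i(X,\mathcal O_X(mD+m_0H))=0$ for all $i>0$ and all $m\ge 0$---nefness of $D$ is precisely what makes this bound uniform in $m$. Then only $m_0$ steps are needed, and the long exact sequence yields $h^i(X,\mathcal O_X(mD))\le\sum_{k=1}^{m_0}h^{i-1}\bigl(Y,\mathcal O_Y((mD+kH)|_Y)\bigr)$, a bounded sum of terms each $O(m^{n-i})$ by induction on dimension. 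Fujita vanishing is the missing ingredient; without it (or an equivalent uniform statement) the induction does not close. A secondary issue: your appeal to Serre duality to kill $h^i(X,\mathcal O(m(D-CH)))$ for $1\le i\le n-1$ tacitly assumes $X$ is Cohen--Macaulay, which the theorem does not.
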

For simplicity of notation, we let $h^0(X,D):=h^0(X,\mathcal O(D))$.

 \subsection{Proof of Theorem \ref{Key}}\label{sec:proofofKey}
 We need a technical lemma.
 \begin{lemma}\label{countinglambda}
Let $V$ be a
projective variety of dimension $n$.
Let $D_1, \dots, D_{n+1} $ be    effective  Cartier divisors of $V$ defined over $k$ in general position.   Suppose that there exists an ample Cartier divisor $A$ on $V$   such that $D_i\equiv  A$ for all $1\le i\le n+1$.   Let $Y$ be a   closed subscheme of $V$ of codimension at least 2.    Let $\pi: \widetilde V\to V$ be the blowup along $Y$, and $E$ be the exceptional divisor.  Let $D:=D_1+\cdots +D_{n+1}$.  Then   for all sufficiently large
$\ell$,
  the divisor $\ell \pi^* D- E$ is ample and
\begin{align}\label{beta}
\gamma_{\mathcal L,\pi^*{D_i} }  \le \frac  1{\ell}\left(1+  O\left(\frac1{\ell^{2}}\right)\right) \le \frac 1\ell \left(1 + \frac 1{\ell\sqrt \ell}\right),
\end{align}
where $\mathcal L=\mathcal O(\ell \pi^* D- E)$.
\end{lemma}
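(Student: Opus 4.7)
The plan is to first verify the ampleness assertion, then compute $\gamma_{\mathcal L, \pi^* D_i}$ to leading order in $\ell$ via Theorem \ref{RR} combined with a key intersection-theoretic vanishing on the blowup. Since $D = D_1 + \cdots + D_{n+1}$ is numerically equivalent to $(n+1)A$ and hence ample on $V$, and since $-E$ is $\pi$-relatively ample on the blowup $\widetilde V$, the standard principle that $\pi^* L^{\otimes \ell} \otimes \mathcal O(-E)$ becomes ample for $\ell \gg 0$ whenever $L$ is ample on the base gives that $\mathcal L = \mathcal O(\ell \pi^* D - E)$ is ample for all sufficiently large $\ell$.

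For the $\gamma$ bound, applying Theorem \ref{RR} to the ample (hence nef) $\mathcal L$ computes the numerator of $\gamma_{\mathcal L, \pi^* D_i}$ as
\begin{align*}
N \cdot h^0(\widetilde V, \mathcal L^N) = \frac{N^{n+1}}{n!}(\ell \pi^* D - E)^n + O(N^n).
\end{align*}
For the denominator, I substitute $m = Nt$ and interpret $\sum_{m \geq 1} h^0(\widetilde V, \mathcal L^N(-m \pi^* D_i))$ as a Riemann sum, whose limit (modulo an $o(N^{n+1})$ error) equals
\begin{align*}
\frac{N^{n+1}}{n!} \int_{0}^{T_\ell} (\mathcal L - t \pi^* D_i)^n \, dt,
\end{align*}
where $T_\ell$ is the supremum of $t$ for which $\mathcal L - t \pi^* D_i$ stays nef. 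Since $\mathcal L - t \pi^* D_i \equiv (\ell(n+1) - t) \pi^* A - E$ and since $c \pi^* A - E$ is ample as soon as $c \geq c_0$ for a constant $c_0$ depending only on $(V, Y, A)$, I take $T_\ell = \ell(n+1) - c_0$.

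The key intersection-theoretic vanishing is
\begin{align*}
(\pi^* A)^{n-j} \cdot E^j = 0 \quad \text{for } 0 < j < r := \codim_V Y,
\end{align*}
which follows from the projection formula since $\pi_*(E^j)$ is a cycle of dimension $n - j$ supported on $Y$, and thus vanishes whenever $n - j > \dim Y = n - r$. A binomial expansion then yields
\begin{align*}
(c \pi^* A - E)^n = c^n A^n + O(c^{n-r})
\end{align*}
for any $c > 0$. Substituting $c = \ell(n+1)$ in the numerator, and $c = \ell(n+1) - t$ in the integrand followed by $u = \ell(n+1) - t$, produces
\begin{align*}
(\ell \pi^* D - E)^n &= (n+1)^n A^n \, \ell^n \bigl(1 + O(\ell^{-r})\bigr), \\
\int_{0}^{T_\ell} (\mathcal L - t \pi^* D_i)^n \, dt &= (n+1)^n A^n \, \ell^{n+1} \bigl(1 + O(\ell^{-r})\bigr).
\end{align*}
Dividing gives $\gamma_{\mathcal L, \pi^* D_i} \leq \frac{1}{\ell}\bigl(1 + O(\ell^{-r})\bigr)$, and since $r \geq 2$ this implies the stated bound for all $\ell$ sufficiently large.

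The main obstacle is to make the Riemann-sum approximation in the denominator rigorous to leading order, which requires the $O(N^{n-1})$ error in Theorem \ref{RR}, applied to the divisor $N(\mathcal L - t \pi^* D_i)$, to be uniform as $t$ varies over the compact interval $[0, T_\ell]$. Since the relevant intersection numbers are polynomials of bounded degree in $t$ and the family stays in the nef cone on this interval, a careful tracking of the error constants in asymptotic Riemann--Roch supplies the required uniformity. A subsidiary ingredient is the existence of the constant $c_0$, which reflects the positive Seshadri-type constant of $A$ along $Y$: once $c$ exceeds this threshold, $c \pi^* A - E$ becomes ample on $\widetilde V$, ensuring nefness throughout $[0, T_\ell]$.
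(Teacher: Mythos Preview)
Your approach is correct and follows the same overall strategy as the paper: estimate both numerator and denominator of $\gamma_{\mathcal L,\pi^*D_i}$ via asymptotic Riemann--Roch (Theorem~\ref{RR}), and use the vanishing $(\pi^*A)^{n-1}\cdot E=0$ (which you record in the sharper form $(\pi^*A)^{n-j}\cdot E^j=0$ for $0<j<r$, though only $r=2$ is needed for the claimed $O(\ell^{-2})$) to control the lower-order terms in the expansion of $(c\,\pi^*A-E)^n$. The only substantive difference is the treatment of the denominator. Instead of passing to an integral and invoking uniformity of the $O(N^{n-1})$ error in Theorem~\ref{RR} as $t$ varies, the paper sets $N=M\ell$ and, for each $m$, bounds
\[
h^0\bigl(\widetilde V,\mathcal L^{M\ell}(-m\,\pi^*D_i)\bigr)\;\ge\; h^0\bigl(\widetilde V,\,M\cdot F_{[m/M]}\bigr),
\qquad F_j:=\bigl((n{+}1)\ell^2-j-1\bigr)\pi^*A-\ell E.
\]
Since $j=[m/M]$ ranges over a finite set depending only on $\ell$, the sum over $m$ reduces to finitely many applications of Theorem~\ref{RR} with $M\to\infty$, and no uniformity statement is required. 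This floor-function device is precisely a rigorous discretization of your Riemann-sum step and directly resolves the obstacle you flag; conversely, your integral formulation makes the final estimate $\gamma\le \ell^{-1}(1+O(\ell^{-2}))$ more transparent to read off.
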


\begin{proof}[Proof of Theorem \ref{Key}]
Since the set of $(\sum_{i=1}^{n+1} D_i,S)$-integral points do not change by replacing $D_i$ with a multiple $m_i D_i$, with $m_i=\prod_{1\le j\ne i\le n+1} d_j$, we will assume that $D_i\equiv A$ for each $1\le i\le n+1$.
 By Lemma \ref{representation}, we find an effective Cartier divisor $H_1$ which contains $Y$.  Since the codimension of $Y$ is at least two, we can choose a point inside each irreducible component of $\supp H_1$ lying outside of $\supp Y$; let $T$ be the set of such points.  Now, we apply Proposition \ref{divisorcontainY}, to obtain an effective Cartier divisor $H_2$ containing $Y$ while not containing  any point  of the set $T\bigcup \cup_{i=1}^{n+1}( \cap_{1\le j\ne i\le n+1}\supp D_j)$. Since the $\supp(H_2)$ intersected with each irreducible component of $\supp(H_1)$ is a proper subset of the irreducible component, it follows that $Y'=H_1\cap H_2$ has codimension $2$ in $V$ and $H_1$ and $H_2$ are in general position.
Consequently,  $Y'$ is locally defined by the local equations of $H_1$ and $H_2$ which form a regular sequence, as $V$ is Cohen-Macaulay (See  Remark \ref{gpCM}).
Therefore,
$Y'$ is of locally complete intersection.
Moreover, $Y'$ is  not contained in any of the $D_i$ since $Y'\supset Y$.
As $h_Y\le h_{Y'}+O(1)$, we may assume $Y$ is of locally complete intersection  of codimension exactly $2$  by replacing $Y$ with $Y'$.

Let $\pi: \widetilde V\to V$ be the blowup along $Y$, and $E$ be the exceptional divisor.
Since the support of $Y$ is not contained in any $\supp D_i$, $1\le i\le n+1$, then $\pi^* D_i$ is the strict transform $ \widetilde D_i$ of $D_i$, for each $1\le i\le n+1$.    Moreover,  $\pi |_{ \widetilde D_i} :  \widetilde D_i\to D_i$ is the blowup along $D_i\cap Y$ (See \cite[Chapter II, Corollary 7.15]{Ha});   in particular, $\pi(\widetilde {D_i}) = D_i$.

\begin{claim}
$ \widetilde D_1,\dots, \widetilde D_{n+1}$ are in general position.
\end{claim}

\begin{proof}
We first check their intersection is empty.  Suppose that $q\in  \widetilde D_i$, for each $1\le i\le n+1$.  Then $\pi(q)\in \cap_{i=1}^{n+1}\supp  D_i$ which is not possible as $D_1,\hdots,D_{n+1}$ are in general position.
Next, let $I\subset \{1,\dots,n+1\}$ with $\# I\le n$.
We show that $\dim (\cap_{i\in I} \supp  \widetilde D_i)\le n-\#I $.
By rearranging the index of the $D_i$, we let $I=\{1,\hdots,r\}$.
By assumption, $\cap_{i=1}^r D_i$ has dimension at most $n-r$, but this dimension is also at least $n-r$
because each $D_i$ is ample (see \cite[Corollary 1.2.24]{Laza}).
Now, if it has an irreducible component $W$ (of dimension $n-r$) lying inside $Y$, then
$W.D_{r+1}.\cdots .D_{n}  = W.A^{n-r} >0$, contradicting the assumption that $Y$ does not contain points in $\cap_{i=1}^n D_i$.  Therefore, $\dim (\cap_{i=1}^r D_i ) \setminus Y = \dim \cap_{i=1}^r D_i = n-r$.
Since $(\cap_{i=1}^r   \widetilde {D_i})\setminus \pi^{-1}(Y)$ and $(\cap_{i=1}^r  D_i)   \setminus Y$ are isomorphic, if we prove that any component $W$ of $(\cap_{i=1}^r   \widetilde {D_i})$ which is contained in $\pi^{-1}(Y)$ satisfies $\dim W\le n-r$, then $\dim (\cap_{i=1}^r \widetilde{D_i}) = \dim (\cap_{i=1}^r \widetilde{D_i} \setminus \pi^{-1}(Y)) = n-r$ and we are done.
Now, let $Z=\pi(W)\subset (\cap_{i=1}^r D_i )\cap Y $.  Since  we have shown that no component of $\cap_{i=1}^r D_i $ is contained in $Y$, $\dim Z\le n-r-1$.
By Proposition \ref{CohenMacaulay}, $\dim \pi^{-1}(Z)=\dim Z+1\le n-r$.
Since $W\subset\pi^{-1}(Z)$, $\dim W\le n-r$.
This completes the proof of the claim.
\end{proof}

Since $Y$ is of locally complete intersection, $ \widetilde V$ is Cohen-Macaulay by Proposition \ref{CohenMacaulay} and hence $ \widetilde D_1,\dots, \widetilde D_{n+1}$ intersect properly.
Let $D:=D_1+\cdots +D_{n+1}$.
 Let $\ell$ be a fixed positive integer satisfying $\frac{4(n+1)}{\sqrt \ell} < \epsilon$ such that the line sheaf $\mathcal L = \mathcal O(\ell \pi^* D- E)$ is ample and Lemma \ref{countinglambda} holds true, i.e.
\begin{align}
\gamma_{\mathcal L,\pi^*{D_i} }  \le \frac  1{\ell}\left(1+  \frac1{\ell \sqrt \ell}\right).
\end{align}
Let $\epsilon' = \ell^{-5/2}$.
Applying Theorem \ref{Ru-Vojta} to $\epsilon'$, $ \widetilde V$, $\mathcal L$ and  $\pi^* D_i =  \widetilde{D_i}$ (for $1\le i\le  n+1$), we have
\begin{align}
\sum_{i=1}^{n+1}m_{\pi^*D_i,S}(x) &\le  \left(\frac1\ell(1+  \frac1{\ell\sqrt \ell})+\epsilon'\right)h_{\ell \pi^* D- E}(x)\nonumber \\
&= \left(1 + \frac 2{\ell\sqrt \ell}\right) h_{\pi^* D}(x) - \left(\frac 1\ell + \frac 2{\ell^2 \sqrt \ell}\right) h_E(x) \label{eq:fromruvojta}
\end{align}
holds for  all $x$ outside a proper Zariski-closed subset $Z $ of $ \widetilde V(k)$.
By the functorial properties of the local height functions, $h_D=m_{D,S}+N_{D,S}$, and $h_E = h_Y\circ \pi$, we have
\begin{align}\label{eq:fromfunct}
\frac{1 }{\ell}h_Y(\pi(x))\le   \frac 2{\ell \sqrt \ell}\cdot h_D(\pi (x))+  \sum_{i=1}^{n+1}N_{D_i,S}(\pi(x))
\end{align}
holds for  all $\pi (x)$ outside a proper Zariski-closed subset $Z'$ of $ V(k)$.  Since $D\equiv (n+1)A$, by Proposition \ref{compareheight}, we may replace
\begin{equation}\label{eq:replaceD}
h_D(\pi (x))\le (n+1)(1+\epsilon')h_A(\pi (x))+c_1
\end{equation}
Let $R$ be a set of $(D,S)$-integral points on $V(k)$.
Then there is a constant $c_2$ such that
$$
\sum_{i=1}^{n+1}N_{D_i,S}(P)\le c_2
$$
for all $P\in R$.
Consequently, \eqref{eq:fromfunct} implies
\begin{align}\label{heightbound}
 h_Y(P) &\le   \frac{2 (n+1)(1+\epsilon')}{\sqrt \ell} \cdot h_A(P)+ \frac{ 2 c_1}{\sqrt \ell}+\ell c_2\cr
 &\le \frac{4(n+1)}{\sqrt \ell} h_A(P)+\frac{ 2 c_1}{\sqrt \ell} + \ell c_2\cr
 &< \epsilon h_A(P) + O(1)
\end{align}
holds for  all for all  $P\in R\setminus Z'$.
\end{proof}

\begin{proof}[Proof of Lemma \ref{countinglambda}]
Let $\ell$ be a fixed large integer, and let $M\ge \ell^3$ be a sufficiently large integer.
Let $\mathcal L=\mathcal O(\ell \pi^* D- E)$.
Since $D$ is numerically equivalent to $(n+1)A$, Theorem \ref{RR} implies
\begin{align}\label{total}
h^0( \widetilde V,  {\mathcal L}^{M\ell}) &= h^0( \widetilde V,  M\ell(\ell\pi^* D- E))\cr
&=\frac{(M\ell)^n(\ell\pi^* D- E)^n}{n!} + O((M\ell)^{n-1})\cr
&=\frac{\big(((n+1)\ell)^nA^n+C\big)(M\ell)^n }{n!} + O((M\ell)^{n-1}),
\end{align}
where
\begin{align}\label{C}
C=\sum_{j=1}^n\binom {n}{j}(\ell(n+1))^{n-j}(\pi^*A)^{n-j}(-E)^j.
\end{align}
Let $m\le   (n+1) M\ell^2-M $ be a positive integer.
Similarly,
\begin{align}\label{Eachm}
h^0( \widetilde V, {\mathcal L}^{M\ell}(-m\pi^*D_i)) &= h^0( \widetilde V,  M\ell(\ell\pi^* D- E)-m\pi^*D_i)\cr
&= h^0( \widetilde V,  ((n+1)M\ell^2  -m)\pi^* A- M\ell E)\cr
&\ge h^0\big( \widetilde V,  M  \big(( (n+1)\ell^2  -[\frac {m }{M }]-1 )\pi^* A-  \ell E\big)\big)\cr
&=\frac{((n+1)\ell^2 -[\frac {m }{M }]-1 )^n A^n+  C_{[\frac {m }{M }],\ell}}{n!} M^{n}+ O( M^{n-1})\cr
&\ge \frac{( (n+1)\ell^2  -\frac {m }{M}-1)^n  A^n+  C_{[\frac {m }{M }],\ell}}{n!} M^{n}+ O( M^{n-1})\cr
&=  \frac{( (n+1)M\ell^2 -M-m)^n  A^n+ C_{[\frac {m }{M }],\ell}  M^{n}}{n!} + O( M^{n-1}),
\end{align}
where
\begin{align}\label{Cm}
C_{[\frac {m }{M }],\ell}=\sum_{j=1}^n\binom {n}{j}\ell^j((n+1)\ell^2 -[\frac {m }{M }]-1 )^{n-j}(\pi^*A)^{n-j}(-E)^j.
\end{align}
  Since $0\le \frac {m }{M }\le   (n+1)  \ell^2-1$,
we let
\begin{align}\label{Cell}
C_\ell=\max_{0\le j\le (n+1)  \ell^2-1}\{|C_{j,\ell}|\}.
\end{align}
Then we have
\begin{align*}
n!\sum_{m\ge 1} &h^0( \widetilde V, {\mathcal L}^{M\ell}(-m\pi^*D_i))\cr
&\ge \sum_{m=1}^{(n+1) M\ell^2-M} \bigg(( (n+1)M\ell^2 -M-m)^n  A^n+ C_{[\frac {m }{M }],\ell}  M^{n}+ O(M^{n-1})\bigg)\cr
&\ge \frac {( (n+1)M\ell^2 -M-1)^{n+1} A^n}{n+1} - ((n+1) M\ell^2-M)C_{\ell}  M^{n}+O(M^n\ell^2)\cr
&= ((n+1)\ell  -\frac1\ell   -\frac 1{M\ell})^{n+1} \frac {A^n(M\ell)^{n+1}}{n+1} -((n+1)\ell-\frac1\ell)(M\ell)^{n+1}\frac{C_{\ell}}{\ell^n}+O(M^n\ell^2).
\end{align*}
Consequently,
\begin{align*}
\gamma_{\mathcal L,\pi^*{D_i} } &=\limsup_{M\to\infty} \frac{M\ell h^0( \widetilde V,  {\mathcal L}^{M\ell}) }{\sum_{m\ge 1}  h^0( \widetilde V, {\mathcal L}^{M\ell}(-m\pi^*D_i))} \cr
&\le\frac{((n+1)\ell)^n A^n+C}{((n+1)\ell  -\frac1\ell)^{n+1} \frac {A^n }{n+1}-((n+1)\ell  -\frac1\ell)\frac{  C_{\ell}}{\ell^n}}\cr
&=\frac  {c_{\ell}}{ \ell} ,
\end{align*}
where
$$
c_{\ell}=\frac{1+(\frac  1{(n+1) \ell})^n\frac{C}{A^n}}{ (1-\frac 1{(n+1)\ell^2} )^{n+1}- \big( \frac 1{(n+1)^{n-1}\ell^{2n}}  - \frac 1{(n+1)^n\ell^{2n+2}}\big) \frac{C_{\ell}}{A^n}}.
$$
We note that $(\pi^* A)^{n-1}E=A^{n-1}Y=0$ since the codimension of $Y$ is  at least 2.
(See \cite[Chapter VI. Proposition 2.11]{Kollar}).
  Therefore,   \eqref{C} and \eqref{Cm} satisfy
\begin{align*}
C &=\binom {n}{2}(n+1)^{n-2}(\pi^*A)^{n-2}E^2 \cdot \ell^{n-2}+O(\ell^{n-3})\\
C_{[\frac {m }{M }],\ell} &= \binom n2 (n+1)^{n-2} (\pi^*A)^{n-2} E^2 \cdot \ell^{2n-2}  + O(\ell^{2n-3}).
\end{align*}
Then for $\ell$ sufficiently large, $c_\ell = 1 + O(\frac 1{\ell^2})$, proving the claim.
\end{proof}

\section{Proof of Theorem \ref{proximityEnumerical}}\label{Proximity}

We recall the following theorem  of Levin, which is a generalization of a result of Evertse and Ferretti \cite[Theorem 1.6]{ef_festschrift}.
\begin{theorem}\cite[Theorem 3.2]{levin_duke}\label{LevinDuke}
Let $X$ be a projective variety of dimension $n$ defined over a number field $k$. Let $S$ be a finite set of places of $k$.  For each $v\in S$, let $D_{0,v},\hdots,D_{n,v}$ be effective Cartier divisors on $X$, defined over $k$, in general position.  Suppose that there exists an ample Cartier divisor $A$ on $X$ and positive integer $d_{i,v}$ such that $D_{i,v}\equiv d_{i,v}A$ for all $i$ and all $v\in S$.  Let $\epsilon>0$.  Then there exists a proper Zariski-closed subset $Z\subset X$ such that for all points $P\in X(k)\setminus Z$,
$$
\sum_{v\in S}\sum_{i=0}^n\frac{\lambda_{D_{i,v}, v}(P)}{d_{i,v}}<(n+1+\epsilon)h_A(P).
$$
\end{theorem}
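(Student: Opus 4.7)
The plan is to build an auxiliary effective divisor $F$ containing $Y$ and linearly equivalent to a multiple of $A$, then apply Theorem \ref{LevinDuke} for each $v\in S$ with the $(n+1)$-tuple consisting of $F$ together with the $n$ divisors among $D_1,\ldots,D_{n+1}$ at which $P$ is closest $v$-adically.

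The geometric heart of the argument is verifying that the hypothesis $Y\cap B=\emptyset$ (where $B:=\bigcup_{i=1}^{n+1}\bigcap_{j\ne i}\supp D_j$ is a finite set of points) forces $Y$ to avoid every top-dimensional irreducible component of every partial intersection $\bigcap_{j\in J}\supp D_j$ with $J\subsetneq\{1,\dots,n+1\}$. Indeed, if $C$ is such a component with $\dim C=n-|J|$, then for any $k\notin J$ one has $C\not\subseteq D_k$ (otherwise $\dim C\le n-|J|-1$ by the general position of the $D_i$'s), so by Krull's height theorem $C\cap D_k$ is a proper closed subscheme of $C$ of dimension $n-|J|-1$, and nonempty since $D_k\equiv d_kA$ is ample. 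Iterating with $n-|J|$ distinct indices $k_1,\ldots,k_{n-|J|}$ leaving out some $i_0\notin J$, $C\cap D_{k_1}\cap\cdots\cap D_{k_{n-|J|}}$ is a nonempty zero-dimensional subset of $\bigcap_{j\ne i_0}\supp D_j\subseteq B$; hence $C\not\subseteq Y$. Since only finitely many such top-dimensional components $C$ arise across all $J$, I would pick $p_C\in C\setminus Y$ for each, and apply Proposition \ref{divisorcontainY} to produce $F$: an effective Cartier divisor over $k$, linearly equivalent to $d_0A$ for some positive integer $d_0$, containing $Y$ scheme-theoretically, and avoiding every point of $B\cup\{p_C\}_C$. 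Avoidance of $B$ ensures $F$ meets no $n$-fold intersection of the $D_j$'s, and avoidance of the $p_C$ ensures $F$ contains no top-dimensional component of any smaller intersection; together these mean that $F$ and any $n$-subset of $\{D_1,\ldots,D_{n+1}\}$ form an $(n+1)$-tuple in general position.

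For each $v\in S$ let $\sigma_v$ be a permutation of $\{1,\dots,n+1\}$ with $\lambda_{D_{\sigma_v(1)},v}(P)\ge\cdots\ge\lambda_{D_{\sigma_v(n+1)},v}(P)$. Only finitely many tuples $(\sigma_v)_{v\in S}$ can occur, so I would apply Theorem \ref{LevinDuke} once for each tuple with $D_{0,v}=F$ (coefficient $d_0$) and $D_{j,v}=D_{\sigma_v(j)}$ for $1\le j\le n$, taking the union of the resulting proper closed subsets. Outside this union,
\begin{equation*}
\sum_{v\in S}\frac{\lambda_{F,v}(P)}{d_0}+\sum_{v\in S}\sum_{j=1}^n\frac{\lambda_{D_{\sigma_v(j)},v}(P)}{d_{\sigma_v(j)}}<(n+1+\epsilon_1)h_A(P).
\end{equation*}
Rewriting the double sum as $\sum_{v\in S}\sum_{i=1}^{n+1}\lambda_{D_i,v}(P)/d_i-\sum_{v\in S}\lambda_{D_{\sigma_v(n+1)},v}(P)/d_{\sigma_v(n+1)}$, the first piece equals $(n+1)h_A(P)+O(\epsilon_2 h_A(P))+O(1)$ by $(D,S)$-integrality of $P$ (so $N_{D_i,S}(P)=O(1)$) and Proposition \ref{compareheight} applied to $D_i\equiv d_iA$; the second piece is $O(1)$ because $\bigcap_{i=1}^{n+1}\supp D_i=\emptyset$ makes $\min_i\lambda_{D_i,v}$ bounded by an $M_k$-constant uniformly in $P$. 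Combining with $\lambda_{Y,v}\le\lambda_{F,v}$ up to an $M_k$-constant (since $Y\subseteq F$) yields $m_{Y,S}(P)\le d_0(\epsilon_1+O(\epsilon_2))h_A(P)+O(1)$, and choosing $\epsilon_1,\epsilon_2$ small relative to $\epsilon/d_0$ completes the proof.

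The main obstacle is the geometric step in the second paragraph: the configuration hypothesis on $Y$ must be translated into the stronger statement that $Y$ avoids every top-dimensional stratum of partial intersections of the $D_i$'s, which is the precise input needed for Proposition \ref{divisorcontainY} to produce an $F$ fitting into Theorem \ref{LevinDuke}. Once this is secured, the rest is an organized bookkeeping of the Levin--Evertse--Ferretti bound combined with the integrality assumption and the numerical-equivalence transfer provided by Proposition \ref{compareheight}.
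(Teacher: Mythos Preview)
Your proposal is not a proof of Theorem~\ref{LevinDuke}: that result is quoted from \cite{levin_duke} and the paper gives no proof of it. What you have written is a proof of Theorem~\ref{proximityEnumerical}, which \emph{uses} Theorem~\ref{LevinDuke} as a black box.

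Read as a proof of Theorem~\ref{proximityEnumerical}, your argument is correct and close in spirit to the paper's, which proceeds via Corollary~\ref{proximityEnumericalDivisor}. Both construct an auxiliary effective divisor containing $Y$ via Proposition~\ref{divisorcontainY}, feed it together with $n$ of the $D_i$ into Theorem~\ref{LevinDuke}, use that $\min_i \lambda_{D_i,v}$ is bounded (your observation that $\cap_{i=1}^{n+1}\supp D_i=\emptyset$ bounds the dropped term is exactly Lemma~\ref{subgeneralweil} in the paper), and subtract off $\sum_{i=1}^{n+1} m_{D_i,S}$ using integrality and Proposition~\ref{compareheight}. The one substantive difference is how general position of the auxiliary divisor with the $D_i$ is secured. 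You force $F$ to avoid, in addition to $B$, a witness point $p_C$ on every top-dimensional component $C$ of every partial intersection $\cap_{j\in J} D_j$, so that each tuple $(F,D_{\sigma_v(1)},\ldots,D_{\sigma_v(n)})$ is visibly in general position. The paper only asks the auxiliary divisor to avoid $B$ and then argues---using that it and all the $D_i$ are ample---that the full $(n+2)$-tuple $D_0,D_1,\ldots,D_{n+1}$ is in general position: if some $\cap_{j\in I}$ had excess dimension, intersecting with enough of the remaining ample divisors would yield a nonempty subset of an already-empty $(n+1)$-fold intersection. Your route makes the geometric content of the hypothesis $Y\cap B=\emptyset$ more transparent (it cascades down to all strata); the paper's route is more economical, needing only the finite set $B$ in Proposition~\ref{divisorcontainY} and a single ampleness-based dimension count afterward.
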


\begin{lemma}\label{subgeneralweil}
Let $D_1,\cdots,D_q$  be effective divisors on a projective variety $V$  of dimension $n$, defined over $k$, in  general position.
Then
\begin{align}\label{l-inequality}
\sum_{i=1}^q \l_{D_i,v}\le  \max_I \sum_{j\in I}  \l_{D_{j},v}
\end{align}
up to an $M_k$ constant, where $I$ runs over all index subsets of $\{1,\cdots,q\}$ with $n$ elements.
\end{lemma}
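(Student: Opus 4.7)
The plan is to exploit the general position hypothesis to show that at each point $P$ and each place $v$, only the top $n$ values among $\{\lambda_{D_i,v}(P)\}_{i=1}^q$ can be large; the remaining $q-n$ values must be uniformly bounded.

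The key observation is that for any subset $J\subset\{1,\ldots,q\}$ with $|J|=n+1$, general position forces $\bigcap_{j\in J}\supp D_j=\emptyset$. Viewing each $D_j$ as a closed subscheme and applying Definition-Theorem \ref{weil}, the Weil function of $\bigcap_{j\in J} D_j$ agrees with $\min_{j\in J}\lambda_{D_j,v}$ up to an $M_k$-constant. Since the Weil function of the empty subscheme is bounded, there is an $M_k$-constant $\{\gamma_{J,v}\}$ with $\min_{j\in J}\lambda_{D_j,v}(P)\le\gamma_{J,v}$ for every $P$ in the common domain. Taking $\gamma_v:=\max_{|J|=n+1}\gamma_{J,v}$ (a maximum over finitely many $M_k$-constants, hence itself an $M_k$-constant) provides a single uniform bound valid for every $(n+1)$-subset $J$ simultaneously.

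Now fix $P$ and $v$, and let $\sigma$ be a permutation of $\{1,\ldots,q\}$, depending on $P$ and $v$, such that
\[\lambda_{D_{\sigma(1)},v}(P)\ge\lambda_{D_{\sigma(2)},v}(P)\ge\cdots\ge\lambda_{D_{\sigma(q)},v}(P).\]
Applying the key observation to $J=\{\sigma(1),\ldots,\sigma(n+1)\}$ gives $\lambda_{D_{\sigma(n+1)},v}(P)\le\gamma_v$, and hence $\lambda_{D_{\sigma(j)},v}(P)\le\gamma_v$ for every $j\ge n+1$ by the ordering. Splitting the sum then yields
\[\sum_{i=1}^q\lambda_{D_i,v}(P) = \sum_{j=1}^n\lambda_{D_{\sigma(j)},v}(P)+\sum_{j=n+1}^q\lambda_{D_{\sigma(j)},v}(P) \le \max_{|I|=n}\sum_{j\in I}\lambda_{D_j,v}(P)+(q-n)\gamma_v,\]
where we have used that $\{\sigma(1),\ldots,\sigma(n)\}$ is one of the size-$n$ index subsets of $\{1,\ldots,q\}$. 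This is the desired inequality up to the $M_k$-constant $\{(q-n)\gamma_v\}$.

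The only delicate point is ensuring that $\gamma_v$ is truly an $M_k$-constant uniform in $P$ — but this follows immediately from the finiteness of the family of size-$(n+1)$ subsets of $\{1,\ldots,q\}$, together with the standard $M_k$-constant formalism. There is no substantive obstacle beyond this bookkeeping.
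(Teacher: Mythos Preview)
Your argument is correct and follows essentially the same route as the paper: both use that any $(n+1)$-fold intersection is empty, whence $\min_{j\in J}\lambda_{D_j,v}$ is bounded by an $M_k$-constant via Definition-Theorem~\ref{weil}, and then order the Weil-function values at a given point to conclude that only the top $n$ contribute. Your presentation is somewhat more explicit in handling the uniformity of the $M_k$-constant over all $(n+1)$-subsets, but the substance is identical.
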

\begin{proof}
Let $i_1,\cdots,i_q$ be a rearrangement of $1,\cdots,q$.
Since the $D_i, 1\leq i\leq q,$ are in  general position,
$\cap_{t=1}^{n+1}\supp D_{i_t}=\emptyset$.
Then
\begin{align}\label{mini}
\left\{\min_{1\le t\le n+1}\l_{D_{i_t},v}\right\}=\{\l_{\cap_{t=1}^{n+1}D_{i_t},v}\}
\end{align}
is bounded by an $M_k$-constant.  Therefore,
for points $P$ satisfying
\begin{align*}
 \l_{D_{i_1},v}(P)\ge  \l_{D_{i_2},v}(P)\ge\cdots\ge  \l_{D_{i_q},v}(P),
\end{align*}
we have
\begin{align*}
\sum_{i=1}^q \l_{D_i,v}(P)=  \sum_{t=1}^n \l_{D_{i_t},v}(P)
\end{align*}
up to an $M_k$-constant.
Then the assertion (\ref{l-inequality}) follows directly as the number of divisors under consideration is finite.
\end{proof}

\begin{cor}\label{proximityEnumericalDivisor}
 Let $V$ be a  projective variety of dimension $n$ defined over a number field $k$, and let $S$ be a finite set of places of $k$.  Let $D_1, \dots, D_{n+1} $ be    effective  Cartier divisors of $V$  defined over $k$  in general position.   Suppose that there exist an ample Cartier divisor $A$ on $V$ and positive integers $d_i$ such that $D_i\equiv d_iA$ for all $1\le i\le n+1$.      Let $D$ be  an Cartier divisor of $V$ defined over $k$ so that $D\equiv d_0 A$ for some positive integer $d_0$ and its support does not contain any  point  of the set $\cup_{i=1}^{n+1}( \cap_{1\le j\ne i\le n+1}\supp D_j)$.  Then for a given $\epsilon>0$, there exists a proper Zariski closed set $Z$ such that  for any set $R$ of $(\sum_{i=1}^{n+1} D_i,S)$-integral points on $V(k)$ we have
 \begin{align}\label{eq:corstatement}
m_{D,S}(P)\le   \epsilon h_A(P)+O(1)
\end{align}
for all $P\in R\setminus Z$.
\end{cor}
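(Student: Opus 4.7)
The plan is to apply Levin's Theorem \ref{LevinDuke} to the $(n+2)$-element collection $\mathcal{D} := \{D, D_1, \dots, D_{n+1}\}$ by, at each place $v \in S$, omitting the divisor whose Weil function at the given point is smallest and applying the theorem to the remaining $(n+1)$-tuple.

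The essential combinatorial input is that any $(n+1)$ members of $\mathcal{D}$ have empty support intersection. For the subset $\{D_1,\dots,D_{n+1}\}$ itself this holds by general position of the $D_i$. For any subset of the form $\{D\} \cup \{D_j : j \ne i_0\}$, the intersection $\bigcap_{j \ne i_0} \supp D_j$ is at most $0$-dimensional by general position on an $n$-dimensional variety, and by the hypothesis on $D$ its support contains none of those finitely many ``vertex'' points. Running the argument in the proof of Lemma \ref{subgeneralweil} (which only uses this top-level empty-intersection property) then gives, for every $v \in S$ and every relevant $P$,
\[
\min_{D' \in \mathcal{D}} \lambda_{D', v}(P) \;\le\; C_v
\]
up to an $M_k$-constant. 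I would then partition $V(k)$ according to the data $\sigma : S \to \mathcal{D}$ recording which divisor achieves this minimum at each $v$, and work on each of the finitely many $\sigma$-pieces separately.

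Fixing $\sigma$, apply Theorem \ref{LevinDuke} to the $(n+1)$-tuple $\mathcal{D} \setminus \{\sigma(v)\}$ at each $v$: these are all numerically equivalent to positive integer multiples of $A$ (write $d(D) := d_0$, $d(D_i) := d_i$), in general position. The theorem yields a proper Zariski-closed $Z_\sigma$ so that for $P$ in the $\sigma$-piece outside $Z_\sigma$,
\[
\sum_{v \in S}\, \sum_{D' \in \mathcal{D} \setminus \{\sigma(v)\}} \frac{\lambda_{D', v}(P)}{d(D')} \;<\; (n+1+\epsilon_1)\, h_A(P) + O(1).
\]
Adding back the omitted terms $\lambda_{\sigma(v),v}(P)/d(\sigma(v))$, each bounded by the previous paragraph, gives
\[
\frac{m_{D,S}(P)}{d_0} + \sum_{i=1}^{n+1}\frac{m_{D_i,S}(P)}{d_i} \;\le\; (n+1+\epsilon_1)\, h_A(P) + O(1).
\]

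The final step is to feed in the $(\sum_i D_i, S)$-integrality of $R$. This forces $N_{D_i,S}(P) = O(1)$, whence $m_{D_i,S}(P) = h_{D_i}(P)+O(1) \ge (1-\epsilon_2)\, d_i\, h_A(P) - O(1)$ by Proposition \ref{compareheight}. Summing over $i$ and rearranging,
\[
m_{D,S}(P) \;\le\; d_0\bigl(\epsilon_1 + (n+1)\epsilon_2\bigr)\, h_A(P) + O(1),
\]
and choosing $\epsilon_1, \epsilon_2$ small enough (depending on $d_0$, $n$, and $\epsilon$) gives the required bound $m_{D,S}(P) \le \epsilon\, h_A(P) + O(1)$ on the $\sigma$-piece. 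Setting $Z := \bigcup_\sigma Z_\sigma$ finishes the argument. The main obstacle is checking that Theorem \ref{LevinDuke} applies to the $(n+1)$-tuples $\mathcal{D} \setminus \{\sigma(v)\}$: what is really needed is the top-level empty-intersection established at the start, so this should be fine; if the stronger all-level general-position condition is required, one may first invoke Proposition \ref{divisorcontainY} to replace $D$ by a larger effective Cartier divisor $D' \equiv d_0' A$ that contains $D$ scheme-theoretically (so $\lambda_{D,v} \le \lambda_{D',v} + O(1)$) and that meets the intersections of the $D_i$ in the expected dimensions.
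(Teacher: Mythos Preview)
Your approach is essentially the same as the paper's: set $D_0:=D$, drop the smallest Weil function at each $v\in S$ (Lemma~\ref{subgeneralweil}), apply Theorem~\ref{LevinDuke} to the remaining $(n+1)$ divisors, and then exploit $(\sum D_i,S)$-integrality together with Proposition~\ref{compareheight} to cancel the $(n+1)h_A$ term.

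The one place you hedge---whether the $(n+1)$-tuples $\mathcal D\setminus\{\sigma(v)\}$ satisfy the \emph{all-level} general-position hypothesis of Theorem~\ref{LevinDuke}---is resolved in the paper more directly than your proposed fix. Since every member of $\mathcal D=\{D,D_1,\dots,D_{n+1}\}$ is numerically equivalent to a positive multiple of the ample divisor $A$, each is itself ample; a short dimension count then shows that the top-level empty-intersection property you established already forces general position at every level. Indeed, if some $I\subset\{0,\dots,n+1\}$ with $\#I=i\le n$ had $\dim\bigcap_{j\in I}\supp D_j=n-r>n-i$, then intersecting with $n-i+1$ further (ample) members of $\mathcal D$ would leave dimension at least $(n-r)-(n-i+1)=i-r-1\ge 0$, contradicting the emptiness of any $(n+1)$-fold intersection. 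Thus the full family $D_0,\dots,D_{n+1}$ is in general position, and no detour through Proposition~\ref{divisorcontainY} is needed (nor would that proposition, as stated, obviously deliver the general-position condition you want).
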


\begin{proof}
Let $D_0:=D$, and let $d$ be the least common multiple of $d_0,\hdots,d_{n+1}$. When we replace each $D_i$ by $d/d_i D$ and $A$ by $dA$, the set of integral points do not change and \eqref{eq:corstatement} only changes by a multiple. Therefore, we may assume that $d = d_0 = d_1 = \cdots = d_n = 1$.
We observe that  $D_0:=D$, $D_1, \dots, D_{n+1} $ are in general position   for the following reasons.
Let $I\subset \{0,1,\dots,n+1\}$.
First of all, the assumption implies that $\cap_{j\in I} \supp  D_j=\emptyset$ if $\# I> n$.
If $i:=\# I\le n,$ then $\dim (\cap_{j\in I} \supp  D_j)\ge n-i $   because each $D_j$ is ample (see \cite[Corollary 1.2.24]{Laza}).
Suppose  that $\dim (\cap_{j\in I} \supp  D_j)=n-r > n-i$.  Let $j_1,\hdots,j_{n-i+2}$ be the distinct elements of $\{0,1,\dots,n+1\}\setminus I$.
Since $D_0,\hdots,D_{n+1}$ are ample, $\dim (\cap_{j\in I} \supp  D_j)\cap (\cap_{t=1}^{n-i+1}\supp D_{j_{t}} )\ge n-r-(n-i+1)=i-r-1\ge 0$.
This is impossible since $(\cap_{j\in I} \supp  D_j)\cap (\cap_{t=1}^{n-i+1}\supp D_{j_{t}} )$ is empty.

Combining Theorem \ref{LevinDuke} and   Lemma \ref{subgeneralweil}, we find a proper Zariski-closed subset $Z\subset V$ such that for all points $P\in V(k)\setminus Z$
\begin{align}\label{proximity0}
 \sum_{i=0}^{n+1} m_{D_{i}, S}(P) <(n+1+\epsilon) h_A(P).
\end{align}
On the other hand, for a set $R$ of $(\sum_{i=1}^{n+1} D_i,S)$-integral points on $V(k)$, we have for $1\le i\le n+1$,
$$
N_{D_{i}, S}(P)=O(1)
$$
for   $P\in R$.
Therefore,
\begin{align}\label{proximity2}
\sum_{i=1}^{n+1}m_{D_{i}, S}(P) =\sum_{i=1}^{n+1}h_{D_{i}, S}(P)+O(1) \ge (n+1)(1-\epsilon) h_A(P)+O(1)
\end{align}
 for   $P\in R$, by  Proposition \ref{compareheight}.
Then by \eqref{proximity0} and \eqref{proximity2}, we have
\begin{align}\label{proximity1}
m_{D,S}(P)=m_{D_0,S}(P)< (n+2) \epsilon   h_A(P)+O(1),
\end{align}
for $P\in R\setminus Z$.
\end{proof}

\begin{proof}[Proof of Theorem \ref{proximityEnumerical}]
 By Proposition \ref{divisorcontainY},  there exists an effective divisor $D$ linearly equivalent to  $dA$ for some positive integer $d$ with the property that $D\supset Y$ and its support does not contain any point of $\cup_{i=1}^{n+1}( \cap_{1\le j\ne i\le n+1}\supp D_j)$.  Then we arrive immediately that
$$
m_{Y,S}(P)\le m_{D,S}(P)+O(1)
$$ for any $P\notin \supp Y$, and by Corollary \ref{proximityEnumericalDivisor}
there exists a proper Zariski closed set $Z$ such that for a set $R$ of $(\sum_{i=1}^{n+1} D_i,S)$-integral points on $V(k)$
 \begin{align*}
m_{D,S}(P)\le   \epsilon h_A(P) +O(1)
\end{align*}
for all $P\in R\setminus Z$.
Hence, the assertion of the theorem is proved.
 \end{proof}

\begin{remark}\label{rem:proximity}
As in the introduction, Corollary \ref{proximityEnumericalDivisor} implies \cite[Theorem 4.1]{levin_gcd}, namely that
 \begin{align}
m_{D,S}(P)\le   \epsilon h(P) +O(1)
\end{align}
for all $P\in  \mathbb G_m^n(\mathcal O_S)$ outside a Zariski-closed proper set, as long as $D$ does not contain
$$
 P_0:=[1:0:\cdots:0],\hdots,P_n:=[0:0\cdots:0:1].
 $$
In fact, we can replace $D$ and extend to the case of a closed subscheme $Y$ on $\mathbb P^n$ not containing $P_0, \ldots, P_n$, by an argument similar to the beginning of the proof of Theorem \ref{proximityEnumerical} (see also \cite[Remark 4.2]{levin_gcd}).
\end{remark}
 \noindent\textbf{Acknowledgements.}  The first named  author
 wishes to thank the Department of Mathematics, College of Science and Technology, Nihon University for kind hospitality during which part of
the work on this paper took place.


\end{document}